\newtheorem{theorem}{Theorem}[section]
\newtheorem{Corol}[theorem]{Corollary}
\newtheorem{Lemm}[theorem]{Lemma}
\newtheorem{proposition}[theorem]{Proposition}
\newcommand{\p}{\partial}
\newcommand{\R}{\mathbb{R}}
\newcommand{\para}[1]{\left(#1\right)}
\begin{document}

\title[]{Optimal stability for a first order coefficient in a non-self-adjoint wave equation from Dirichlet-To-Neumann map}
\author[M.~Bellassoued]{Mourad~Bellassoued}
\author[I.~Ben A\"{\i}cha]{Ibtissem Ben A\"{\i}cha}
\address{M.~Bellassoued.  University of Tunis El Manar, National
Engineering School of Tunis, ENIT-LAMSIN, B.P. 37, 1002 Tunis,
Tunisia}
\email{mourad.bellassoued@enit.utm.tn }
\address{I.~Ben A\"{\i}cha.University of Tunis El Manar, National
Engineering School of Tunis, ENIT-LAMSIN, B.P. 37, 1002 Tunis,
Tunisia}
\email{ibtissem.benaicha@enit.utm.tn}
\maketitle 


\begin{abstract}
This paper is focused on the study of an inverse problem for a non-self-adjoint hyperbolic equation. More precisely, we attempt to stably recover a first order coefficient appearing in a wave equation from the knowledge of Neumann boundary data. We show in dimension $n$ greater than two, a stability estimate of H\"older type for the inverse problem under consideration. The proof involves the reduction to an auxiliary inverse problem for an electro-magnetic wave equation and the use of an appropriate Carleman estimate.\\
\textbf{Keywords:} Inverse problem, Stability result, Dirichlet-to-Neumann map, Carleman estimate.
\end{abstract}


\section{Introduction and  main results}

The main purpose of this paper is the study of an inverse problem of determining  a coefficient of order one on space appearing  in a non-self-adjoint wave equation. Let $\Omega\subset \R^{n}$ with  $n\geq2$, be an open  bounded set with a sufficiently smooth boundary $\Gamma=\p\Omega$. For $T>0$, we denote  by $Q=\Omega\times(0,T)$ and $\Sigma=\Gamma\times(0,T)$. We introduce the following initial boundary value problem  for the wave equation with a velocity field $V$,
\begin{equation}\label{equation n 1.1}
\left\{
  \begin{array}{ll}
   \mathscr{L}_{V}u:=(\p_{t}^{2}-\Delta+V\cdot \nabla)u=0 & \mbox{in} \,Q, \\
   \\
    u_{|t=0}=\p_{t}u_{|t=0}=0 & \mbox{in}\, \Omega,  \\
    \\
    u=f & \mbox{on} \,\Sigma,
  \end{array}
\right.
\end{equation}  
where  $V\!\in\! W^{1,\infty}(\Omega,\R^{n})$ is a real vector field and 
 $f\!\in\! \mathcal{H}^{1}_{0}(\Sigma)\!:=\!\{ f\!\in\! H^{1}(\Sigma),\,\,f_{|t=0}=0\}$ is the Dirichlet data that is used to probe the system. We may define the  so-called Dirichlet-to-Neumann (DN) map  associated with the wave operator $\mathscr{L}_{V}$ as follows 
$$\begin{array}{ccc}
\Lambda_{V}: \mathcal{H}_{0}^{1}(\Sigma)&\longrightarrow& L^{2}(\Sigma)\\
f&\longmapsto &\p_{\nu}u,
\end{array}$$
  where $\nu$ denotes the unit outward normal to $\Gamma$ at $x$ and $\p_{\nu}u$ stands for $\nabla u\cdot \nu$. 

The inverse problem we address is to determine the velocity field $V$ appearing in  (\ref{equation n 1.1}) from the knowlegde of the  DN map $\Lambda_{V}$ and we aim to derive a stability result for this problem. To our knowledge this paper is the first treating the recovery of a coefficient of order one on space  appearing in a wave equation.

The problem of recovering coefficients appearing in hyperbolic equations gained increasing popularity among mathematicians within the last few decades and there are many works related to this topic. But they are mostly concerned with coefficients of order zero on space. In the case where the unknown coefficient is depending only on the spatial variable, Rakesh and Symes \cite{[BIB19]} proved
by means of geometric optics solutions, a uniqueness result in recovering a
time-independent potential in a wave equation from global Neumann data. The uniqueness by local Neumann data, was considered by Eskin \cite{[BIB13]} and
Isakov \cite{[BIB15]}.  In \cite{[BIB4]}, Bellassoued, Choulli and Yamamoto
proved a log-type stability estimate, in the case where the Neumann data are
observed on any arbitrary subset of the boundary.  Isakov and Sun
\cite{[BIB16]} proved that the knowledge of local Dirichlet-to-Neumann map
yields a stability result of H\"older type in determining a coefficient in a
subdomain. As for stability results obtained from global Neumann data, one can
see Sun \cite{[BIB22]}, Cipolatti and Lopez \cite{[BIB12]}. There are also growing publications on the related inverse problems in Riemannian case. We mention e.g the paper of Bellassoued and Dos  Santos Ferreira \cite{[BIB5]}, Stefanov and Uhlmann \cite{[BIB21]} and \cite{[BA4]} in which Liu and Oksanen consider the problem of recovering  a wave speed $c$ from acoustic boundary measurements modelled by the hyperbolic Dirichlet to Neumann map.  Other than the mentioned papers, the recovery of time-dependent coefficients in hyperbolic equations has also been developped recently, we refer e.g to Bellassoued and Ben A\"icha \cite{[BIB1],[BIB2]} and in the Riemmanian case, we refer to the work of Waters \cite{[BA1]}, in which a stability of  H\"older type was proved, for the identification of  the $X$-ray transform of a time-dependent coefficient in an hyperbolic equation.  In \cite{[BA2]}, R. Salazar considered  the stability issue and  extended the result of the paper  \cite{[BA3]} to more general coefficients and he established a stability result for compactly supported coefficients provided $T$ is sufficiently large. For curiosity, the reader can also see \cite{[BIB8],[BIB17]} and the references therein.

The above papers are concerned only with coefficients of order zero on space. In the case where the unknown coefficient is of order one, we cite for example the paper of  Pohjola \cite{[BIB18]}, in which he considered an inverse problem for a steady state convection diffusion equation. He showed by reducing his problem to the case of a stationary magnetic Schr\"odinger equation that a velocity field can be uniquely determined from the knowledge of Neumann measurements. Cheng, Nakamura and Somersalo \cite{[BIB11]} treated the same problem and they proved a uniqueness result for more regular coefficients. Salo \cite{[BIB20]} also studied this  problem and proved a uniquness result in the case where the coefficient is Lipschitz continuous.  The overall method of proving uniqueness in these papers  was based on reducing the inverse problems under investigation to similar ones for self-adjoint operators and applying the maximum principle. We can also refer to the paper \cite{[BA5]} in which a uniqueness result for a general non-self-adjoint second-order elliptic operator on a manifold with boundary is addressed. 

  The stability  for problems associated with non self-adjoint  operators is never treated before.  In this  work, we consider this challenging problem and we establish a stability  estimate of  H\"older type for the recovery of  the first order coefficient $V$ appearing in the  wave operator $\mathscr{L}_{V}$ from the knowledge of the DN map  $\Lambda_{V}$.  The proof of the stability estimate requires the use of  an $L^{2}$-weighted inequality called a Carleman estimate designed for elliptic operators (see \cite{[BIB7],[BIB10]} ) instead of the maximum principle used in \cite{[BIB18]}. 
  
  Before stating our main result, we introduce the admissible set of the coefficients $V$. Given $M\!>\!0$ and $V_{0}\!\in \!W^{1,\infty}(\Omega, \R^{n})$, we define  
$$\mathcal{V}(M,V_{0}):=\{V\in W^{1,\infty}(\Omega,\R^{n}),\,\,\|V\|_{W^{1,\infty}(\Omega)}\leq M,\,\, \color{black} V=V_{0}\,\,\mbox{on}\,\,\Gamma\color{black} \}.$$
Then our main result can be stated as follows
\begin{theorem}\label{Theorem1.1}
Let $V_{1},\,V_{2}\in \mathcal{V}$ such that $V_{1}-V_{2}\in W^{2,\infty}(\Omega)$. Then, there exist positive constants $\kappa\in(0,1)$ and $C>0$ such that 
$$\|V_{1}-V_{2}\|_{L^{2}(\Omega)}\leq C \|\Lambda_{V_{1}}-\Lambda_{V_{2}}\|^{\kappa}.$$
Here the constant $C$ is depending only on $\Omega$ and $M$ and $\|\cdot\|$ denotes the norm in $\mathcal{L}(\mathcal{H}^{1}_{0}(\Sigma);L^{2}(\Sigma))$.
\end{theorem}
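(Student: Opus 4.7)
My plan is to adapt to this hyperbolic, non-self-adjoint setting the strategy, due to Pohjola in the elliptic case, of reducing a first-order inverse problem to one for an electro-magnetic operator, and then to combine it with the geometric-optics / Carleman estimate machinery developed by Bellassoued and collaborators. The starting observation is the algebraic identity
$$-\Delta + V\cdot\nabla = (-i\nabla + A)^{2} + q_V,\qquad A := \tfrac{i}{2}V,\quad q_V := \tfrac14|V|^{2} - \tfrac12\dive V,$$
obtained by symmetrizing $V\cdot\nabla u = \tfrac12[V\cdot\nabla u + \dive(Vu)] - \tfrac12(\dive V)u$. This rewrites $\mathscr{L}_V$ as an electro-magnetic wave operator with purely imaginary vector potential $A$ and electric potential $q_V$, while leaving $\Lambda_V$ unchanged, so the problem is brought within reach of existing techniques for magnetic wave equations.

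\emph{Integral identity and geometric optics.} Let $u_1$ solve $\mathscr{L}_{V_1} u_1 = 0$ with $u_1|_\Sigma = f$ and zero initial data, and let $u_2$ solve the backward adjoint problem $\mathscr{L}_{V_2}^{*}u_2 = 0$ with $u_2|_\Sigma = g$ and vanishing Cauchy data at $t=T$. Introducing $v$ solving $\mathscr{L}_{V_2} v = 0$ with the same boundary data as $u_1$ and integrating by parts on $w := u_1 - v$ yields the Alessandrini-type identity
$$\int_Q (V_1 - V_2)\cdot\nabla u_1\cdot u_2\, dx\, dt = -\big\langle (\Lambda_{V_1} - \Lambda_{V_2})f,\, g\big\rangle_{L^2(\Sigma)}.$$
I would then plug in, for each large parameter $\sigma$ and each unit direction $\omega\in\s^{n-1}$, geometric optics solutions of the form $u_j = a_j(x,t) e^{\pm i\sigma(\omega\cdot x - t)} + R_j$ with light-like phase; the amplitudes $a_j$ satisfy first-order transport equations $(\partial_t + \omega\cdot\nabla)a_j = \pm\tfrac12(V_j\cdot\omega)\,a_j$ obtained by cancelling the $O(\sigma)$ terms in $\mathscr{L}_{V_j}(a_j e^{\pm i\sigma\phi})$, and the remainders $R_j$ are controlled by energy estimates for the first-order-perturbed wave equation. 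The $O(\sigma)$ contribution to the identity is $i\sigma\int_Q (V_1 - V_2)\cdot\omega\,a_1 a_2\,dx\,dt$, which, with a judicious choice of the amplitudes, becomes a light-ray transform of $(V_1-V_2)\cdot\omega$. Varying $\omega\in\s^{n-1}$ and invoking the Fourier-slice principle then recovers $\|V_1 - V_2\|_{L^2(\Omega)}$ from the collection of these transforms, which is where $n\geq 2$ is used.

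\emph{Quantitative stability and main obstacle.} To promote the previous asymptotic to a H\"older bound, the $\sigma$-expansion has to be made quantitative; this is the role of a Carleman estimate for the associated elliptic operator, obtained via an FBI-type extension that converts the hyperbolic problem into an elliptic one on an enlarged domain, as advertised in the introduction. Optimizing $\sigma$ as a negative power of $\|\Lambda_{V_1}-\Lambda_{V_2}\|$ balances the exponential growth of the GO solutions against the DN map norm and produces the H\"older exponent $\kappa\in(0,1)$. The principal obstacle, as I see it, is twofold. First, the non-self-adjointness of $\mathscr{L}_V$ means that after reduction the effective magnetic potential $A = iV/2$ is purely imaginary, so the standard Carleman and GO results for electro-magnetic wave operators cannot be applied verbatim and must be redone with careful sign tracking; fortunately, the reality of $V$ kills the usual gauge indeterminacy of the magnetic problem. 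Second, the perturbation is of first order rather than zeroth order, so the leading transport equation genuinely involves $V_j$ and the expansion loses one power of $\sigma$ compared with the potential case, forcing the remainder estimates to be sharpened correspondingly. The hypothesis $V_1 = V_2 = V_0$ on $\Gamma$ is used crucially both in discarding boundary traces from the identity and in ensuring that the inversion of the light-ray transform yields $V_1-V_2$ in full.
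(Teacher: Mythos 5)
Your reduction to the magnetic wave operator with $A=\tfrac{i}{2}V$ and $q_V=\tfrac14 V^2-\tfrac12\,\mbox{div}\,V$ is exactly the paper's starting point (Lemma \ref{Lemma 2.1}), and the geometric-optics/light-ray-transform step matches the paper's treatment of the auxiliary problem. But there is a genuine gap at the crucial last step: the light-ray data you extract do \emph{not} determine $V_1-V_2$ itself. From the $O(\sigma)$ term you only get, for each $\omega$, the ray transform $\int_{\R}\omega\cdot(A_1-A_2)(y-s\omega)\,ds$, and the Fourier-slice argument then gives $\widehat{\omega\cdot A}(\xi)$ only for $\xi\perp\omega$; combining directions yields bounds on $\xi_j\widehat a_k-\xi_k\widehat a_j$, i.e.\ on the $2$-form $d\alpha_{A_1}-d\alpha_{A_2}$ (the solenoidal part of $V_1-V_2$), never on the gradient part. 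This is the usual gauge obstruction $N_{A+\nabla\varphi,q}=N_{A,q}$ for $\varphi|_\Gamma=0$, and your remark that ``the reality of $V$ kills the gauge indeterminacy'' is not a substitute for an argument: reality of $V$ does not make the ray transform injective on vector fields. So your claim that varying $\omega$ ``recovers $\|V_1-V_2\|_{L^2}$'' is unjustified as stated, and this is precisely where the advertised Carleman estimate enters the paper --- not, as you suggest, through an FBI-type elliptization of the wave equation to quantify the GO expansion.

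Concretely, the paper closes this gap as follows (Section \ref{Sec3}). It first proves H\"older stability in $H^{-1}$ for \emph{both} $d\alpha_A$ and the electric potential $q$ (Theorem \ref{prop}); recovering $q$ is essential because $q_j=\tfrac14V_j^2-\tfrac12\,\mbox{div}\,V_j$ carries the divergence information that the ray transform misses. Then it writes the Hodge decomposition $V_1-V_2=V'+\nabla\varphi$ with $\varphi\in W^{3,p_0}(\Omega)\cap H^1_0(\Omega)$, where $\|V'\|_{W^{1,p_0}}$ is controlled by $\|d\alpha_{A_1}-d\alpha_{A_2}\|$, and observes that $\varphi$ solves $\Delta\varphi=\mbox{div}(V_1-V_2)=2(q_2-q_1)+\tfrac12(V'+\nabla\varphi)\cdot(V_1+V_2)$ with $\varphi=0$ on $\Gamma$. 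The elliptic boundary Carleman estimate of Proposition \ref{Proposition 3.1}, applied to this $\varphi$ for large $\gamma$ so that the $|\nabla\varphi|^2$ term on the right is absorbed, gives
\begin{equation*}
\|\nabla\varphi\|_{L^{2}(\Omega)}^{2}\leq C\bigl(\|q_{2}-q_{1}\|_{L^{2}(\Omega)}^{2}+\|V'\|_{L^{2}(\Omega)}^{2}+\|\p_{\nu}\varphi\|_{L^{2}(\Gamma_{0})}^{2}\bigr),
\end{equation*}
and each term on the right is converted into a power of $\|N_{A_1,q_1}-N_{A_2,q_2}\|=\|\Lambda_{V_1}-\Lambda_{V_2}\|$ by interpolating the $H^{-1}$ stability estimates against the a priori $W^{1,\infty}$/$W^{2,\infty}$ bounds, the boundary term using $V_1=V_2$ on $\Gamma$ and the trace theorem. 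Your proposal would need this (or an equivalent) mechanism to control $\nabla\varphi$; without it the argument only proves stability for the curl of $V_1-V_2$, not the theorem.
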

The above statement claims stable determination of the velocity field  $V$ from the knowledge of the DN map $\Lambda_{V}$, where both the Dirichlet and Neumann data are performed on the whole boundary $\Sigma.$  By Theorem \ref{Theorem1.1}, we can readily derive the following  
\begin{Corol} 
Let $V_{1},\,V_{2}\in \mathcal{V}$. Then, we have that $\Lambda_{V_{1}}=\Lambda_{V_{2}}$ implies $V_{1}=V_{2}$ everywhere in $\Omega$.
\end{Corol}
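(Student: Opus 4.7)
The plan is to deduce the Corollary as an immediate consequence of Theorem \ref{Theorem1.1}. Since the DN maps coincide we have $\|\Lambda_{V_{1}}-\Lambda_{V_{2}}\|=0$, so the stability estimate would directly give $\|V_{1}-V_{2}\|_{L^{2}(\Omega)}=0$ and hence $V_{1}=V_{2}$ almost everywhere in $\Omega$. Since $V_{1},V_{2}$ are Lipschitz continuous and agree with $V_{0}$ on $\Gamma$, pointwise equality on $\overline{\Omega}$ then follows.

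The one point requiring care is a regularity gap: Theorem \ref{Theorem1.1} demands $V_{1}-V_{2}\in W^{2,\infty}(\Omega)$, whereas the Corollary only hypothesizes $V_{1},V_{2}\in \mathcal{V}$, i.e.\ $W^{1,\infty}$ regularity. My strategy to bridge this is to inspect the proof of Theorem \ref{Theorem1.1} and observe that the extra Sobolev regularity is used exclusively to interpolate between high- and low-frequency parts of $V_{1}-V_{2}$ in order to produce a quantitative H\"older rate. In the qualitative setting relevant to the Corollary, the geometric optics / Carleman identity produced by the proof reads, schematically,
$$\int_{\Omega}(V_{1}-V_{2})(x)\cdot \Phi(x,\xi)\,dx = R(\xi),$$
for every $\xi\in\R^{n}$, where $\Phi$ is a smooth vector-valued amplitude extracted from the CGO solutions and $R$ is bounded by $\|\Lambda_{V_{1}}-\Lambda_{V_{2}}\|$ up to at most a polynomial factor in $|\xi|$. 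When $\Lambda_{V_{1}}=\Lambda_{V_{2}}$ the right-hand side vanishes for every $\xi$, so the Fourier transform of $V_{1}-V_{2}$, extended by zero outside $\Omega$ thanks to the boundary condition $V_{1}=V_{2}$ on $\Gamma$, vanishes identically. This forces $V_{1}=V_{2}$ in $L^{2}(\Omega)$ under the sole hypothesis $V_{1},V_{2}\in W^{1,\infty}(\Omega)$.

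The main obstacle is the gauge-invariance phenomenon intrinsic to first-order perturbations of a second-order operator. Under the reduction to an electro-magnetic wave equation announced in the introduction, the DN map determines $V$ only up to an appropriate gauge. I therefore expect the delicate step to be verifying that the integral identity above recovers the full vector field $V_{1}-V_{2}$, and not merely a gauge-invariant quantity such as $\operatorname{curl}(V_{1}-V_{2})$. The Dirichlet condition $V_{1}=V_{2}=V_{0}$ on $\Gamma$ is precisely what rules out non-trivial gauge factors, so carefully tracing how this boundary condition enters the CGO / Carleman computation is the essential ingredient that makes the above vanishing of $\widehat{V_{1}-V_{2}}$ legitimate and completes the uniqueness conclusion.
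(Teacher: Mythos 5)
Your core deduction is exactly the paper's: the corollary is presented there as an immediate consequence of Theorem \ref{Theorem1.1}, obtained by putting $\|\Lambda_{V_{1}}-\Lambda_{V_{2}}\|=0$ into the H\"older estimate, and your first paragraph reproduces this. You are also right to flag the regularity mismatch (the corollary drops the hypothesis $V_{1}-V_{2}\in W^{2,\infty}(\Omega)$ required by Theorem \ref{Theorem1.1}, and the paper is silent about it), so your extra discussion is a genuine improvement in care.

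However, the mechanism you sketch for bridging that gap is not what the proof actually yields. There is no single identity whose right-hand side controls $\widehat{V_{1}-V_{2}}(\xi)$ directly: the geometric optics step (Lemma \ref{Lemma 2.7}) only controls the light-ray transform $\int_{\R}\omega\cdot A(y-s\omega)\,ds$ with $A=\frac{i}{2}(V_{1}-V_{2})$, i.e.\ only the Fourier components $\xi_{j}\widehat{a}_{k}-\xi_{k}\widehat{a}_{j}$ of the gauge-invariant field $d\alpha_{A}$ -- precisely the gauge obstruction you anticipate, and the boundary condition alone does not remove it at this stage. The gradient part of $V_{1}-V_{2}$ is recovered only afterwards, through the second CGO step for $q_{j}=\frac14 V_{j}^{2}-\frac12\,\mathrm{div}\,V_{j}$ (Lemma \ref{Lemma 2.8}), the Hodge decomposition (\ref{equation n 2.37}), and the elliptic Carleman estimate of Section \ref{Sec3}, where the condition $V_{1}=V_{2}$ on $\Gamma$ is what kills the boundary term $\partial_{\nu}\varphi$. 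The good news is that your guiding intuition is correct: if one reruns those steps with $\Lambda_{V_{1}}=\Lambda_{V_{2}}$ (hence, by Lemma \ref{Lemma 2.1}, $N_{A_{1},q_{1}}=N_{A_{2},q_{2}}$), then letting $\lambda\to\infty$ gives $d\alpha_{A_{1}}=d\alpha_{A_{2}}$, hence $A'=0$ by (\ref{equation n 2.38}), then $q_{1}=q_{2}$, and finally $\nabla\varphi=0$ from the Carleman estimate; the interpolation and trace inequalities (\ref{equation n 3.50})--(\ref{equation n 3.52}), which are the only places where $W^{2,\infty}$ enters, are simply not needed. So the qualitative conclusion does hold under $W^{1,\infty}$ alone, but the argument must pass through $d\alpha_{A}$, $q$ and the Carleman step in sequence, not through a one-shot vanishing of $\widehat{V_{1}-V_{2}}$.
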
 
We point out that since the hyperbolic operator $\mathscr{L}_{V}$ is not self-adjoint, then we should first head toward an auxiliary problem  for an electro-magnetic wave equation in order to be able to prove our main results.  

The remainder of this paper is organized as follows: in Section \ref{Sec2}, we reduce the inverse problem associated with the equation (\ref{equation n 1.1}) to a corresponding inverse  problem for an electro-magnetic wave equation. By the use of an elliptic Carleman estimate, we give in Section \ref{Sec3} the proof of Theorem \ref{Theorem1.1}. 


\section{Reduction of the problem}\label{Sec2}
The overall method of proving the stability  for the inverse problem under consideration is mainly  based on reducing it to an equivalent problem concerning  the following electro-magnetic wave equation
\begin{equation}\label{equation n 2.2}
\left\{
  \begin{array}{ll}
   \mathscr{H}_{A,q}u:=(\p_{t}^{2}-\Delta_{A}+q)u=0 & \mbox{in} \,\,\,Q, \\
   \\
    u_{|t=0}=\p_{t}u_{|t=0}=0 & \mbox{in}\,\,\, \Omega,  \\
   \\ 
    u=f & \mbox{on} \,\Sigma,
  \end{array}
\right.
\end{equation}
where $f\in \mathcal{H}^{1}_{0}(\Sigma)$ is a non homogeneous Dirichlet data,  $A=(a_{j})_{1\leq j\leq n}\in W^{1,\infty}(\Omega,\mathbb{C}^{n})$ is  a  pure imaginary  complex magnetic vector and $q\in L^{\infty}(\Omega,\mathbb{R})$ is a bounded electric potential.
Here $\Delta_{A}$  denotes the magnetic Laplacien and it is given by 
 $$\Delta_{A}=\sum_{j=1}^{n}(\p_{j}+ia_{j})^{2}=\Delta +2i A\cdot \nabla+i\mbox{div}\,A-A\cdot A.$$
According to \cite{[BIB3], [BIB7], [BIB9]}, the initial boudary value problem (\ref{equation n 2.2}) is  well posed and we have the existence of a unique solution within the following class
$u\in \mathcal{C}([0,T];H^{1}(\Omega))\cap \mathcal{C}^{1}([0,T];L^{2}(\Omega)).$    
 Therefore, we may define the DN map $N_{A,q}$  associated with the wave equation (\ref{equation n 2.2}) as follows 
 $$\begin{array}{ccc}
N_{A,q}:\mathcal{H}^{1}_{0}(\Sigma) &\longrightarrow& L^{2}(\Sigma)\\
f&\longmapsto& (\p_{\nu}+iA\cdot \nu)u.
\end{array}$$ 
The purpose of this section is to reduce the inverse problem associated with the wave equation (\ref{equation n 1.1}) to an auxiliary problem for (\ref{equation n 2.2}). Note that if $A$ is of real valued then $\mathscr{H}_{A,q}$ is a self-adjoint wave operator. The strategy is mainly inspired by \cite{[BIB18],[BIB11],[BIB20]}. We specify the choice of the pure imaginary complex vector $A$ and the real function $q$  in such a way $\mathscr{H}_{A,q}$ co\"incide with $\mathscr{L}_{V}$  and the same for the associated DN maps. 

We need first to introduce some notations. Let us  consider the following set  
$$ \mathcal{H}^{1}_{T}(\Sigma)\!:=\!\{g\!\in \!H^{1}(\Sigma);\,\,g_{|t=T}=0\}.$$
 For $g\in H^{1}_{T}(\Sigma)$, we define the adjoint operator of $\Lambda_{V}$ as follows:
$$\begin{array}{ccc}
\Lambda^{*}_{V}: \mathcal{H}_{T}^{1}(\Sigma)&\longrightarrow& L^{2}(\Sigma)\\
g&\longmapsto &\p_{\nu}v,
\end{array}$$
where $v$ here denotes the unique solution of the backward problem
\begin{equation*}
\left\{
  \begin{array}{ll}
   \mathscr{L}^{*}_{V}v=\p_{t}^{2}v-\Delta v-\mbox{div}(Vv)=0 & \mbox{in} \,\,\,Q, \\
  \\
    v_{|t=T}=\p_{t}v_{|t=T}=0 & \mbox{in}\,\,\, \Omega,  \\
   \\
    v=g& \mbox{on} \,\,\,\Sigma.
  \end{array}
\right.
\end{equation*}
On the other hand, we define the adjoint operator of the DN map $N_{A,q}$ as follows 
$$\begin{array}{ccc}
N^{*}_{A,q}: \mathcal{H}_{T}^{1}(\Sigma)&\longrightarrow& L^{2}(\Sigma)\\
g&\longmapsto &(\p_{\nu}+iA\cdot \nu)v,
\end{array}$$
associated to the backward problem 
\begin{equation*}
\left\{
  \begin{array}{ll}
  \mathscr{H}^{*}_{A,q}v=\mathscr{H}_{\overline{A},q}v=0 & \mbox{in} \,\,\,Q, \\
  \\
    v_{|t=T}=\p_{t}v_{|t=T}=0 & \mbox{in}\, \Omega,  \\
   \\
    v=g& \mbox{on} \,\Sigma.
  \end{array}
\right.
\end{equation*} 
In the sequel, we shall make use of the following Green formula for the magnetic Laplacian. Let $A$ be a pure imaginary complex vector in $W^{1,\infty}(\Omega,\mathbb{C}^{n})$. Then, the following identity holds true
 \begin{eqnarray}\label{equation n 2.3}
 \int_{\Omega}\Delta_{A}u\,\overline{v}\,dx&=&-\int_{\Omega}(\nabla+i A)u\, \overline{(\nabla -i A)v}\,dx +\int_{\Gamma}(iA\cdot \nu+\p_{\nu})u \overline{v}\,d\sigma\cr
 &=& \int_{\Omega}\overline{\Delta_{\overline{A}}v}\,u\,dx+\int_{\Gamma}\Big( (\p_{\nu}+i\nu\cdot A)u\,\overline{v}-\overline{(\p_{\nu}+i\nu\cdot\overline{A})v}\,u\,\Big )d\sigma,
 \end{eqnarray}
 for $u,\,v\in H^{1}(\Omega)$ such that $\Delta u,\,\Delta\,v\in L^{2}(\Omega)$. Here $d\sigma$ is the Euclidean surface measure on $\Gamma$. 
Finally, we introduce the admissible sets of the coefficients $A$ and $q$: for $M>0$, $A_{0}\in {W}^{1,\infty}(\Omega,\mathbb{C}^{n})$ and  $q_{0}\in L^{\infty}(\Omega,\R)$, we define
$$\mathcal{A}(M,A_{0}):=\{A\in W^{1,\infty}(\Omega,\mathbb{C}^{n}),\,\, \|A\|_{W^{1,\infty}(\Omega)}\leq M,\,\, A=A_{0}\,\,\mbox{on}\,\,\Gamma \},$$
and   
$$\mathcal{Q}(M,q_{0}):=\{q\in L^{\infty}(\Omega,\R),\,\, \|q\|_{L^{\infty}(\Omega)}\leq M,\,\, \color{black} q=q_{0}\,\,\mbox{on}\,\, \Gamma \color{black} \}.$$ 
We shall now  give  some properties of the considered operators as well as the associated DN maps. This statement  will play a crucial role in proving Theorem \ref{Theorem1.1}.
\begin{Lemm}\label{Lemma 2.1}
 Let $V_{1},\,V_{2}\in\mathcal{V}$. We define  $A_{1},\,A_{2}\in\mathcal{A}(M,A_{0})$ and $q_{1},\,q_{2}\in\mathcal{Q}(M,q_{0})$ by 
 \begin{equation}\label{equation  n 2.4}
  A_{j}=\frac{i}{2}V_{j},\quad 
 \mbox{and} \quad q_{j}=\frac{1}{4}{V_{j}^{2}}-\frac{1}{2}\,\mbox{div}\, V_{j},\quad j=1,\,2.
 \end{equation}
 Then, we have 
 \begin{equation*}
 \mathscr{H}_{A_{j},q_{j}}=\mathscr{L}_{V_{j}},\quad \quad \mathscr{H}^{*}_{A_{j},q_{j}}=\mathscr{L}^{*}_{V_{j}},\quad \mbox{and} \quad N_{\overline{A}_{j},q_{j}}=N_{-A_{j},q_{j}}=N_{A_{j},q_{j}}^{*},\quad j=1,\,2 .
 \end{equation*}  
Moreover, 
\begin{equation}\label{equation  n 2.5}
\|N_{A_{1},q_{1}}-N_{A_{2},q_{2}}\|=\|\Lambda_{V_{1}}-\Lambda_{V_{2}}\|,
\end{equation}
where $\|\cdot\|$ stands for the norm in $\mathcal{L}(\mathcal{H}^{1}_{0}(\Sigma);L^{2}(\Sigma))$.
 \end{Lemm}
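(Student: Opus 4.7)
The proof is purely algebraic and rests on the expansion $\Delta_A = \Delta + 2iA\cdot\nabla + i\,\mathrm{div}\,A - A\cdot A$ recorded in the introduction, together with the explicit choice $A_j = \tfrac{i}{2}V_j$ and $q_j = \tfrac{1}{4}V_j^2 - \tfrac{1}{2}\mathrm{div}\,V_j$. The first step is to substitute $A_j = \tfrac{i}{2}V_j$ into this expansion: one has $2iA_j\cdot\nabla = -V_j\cdot\nabla$, $i\,\mathrm{div}\,A_j = -\tfrac{1}{2}\mathrm{div}\,V_j$ and $A_j\cdot A_j = -\tfrac{1}{4}V_j^2$, so that the zeroth-order contributions of $-\Delta_{A_j}$ exactly cancel those of $q_j$ while the first-order term survives as $+V_j\cdot\nabla$. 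This yields the operator identity $\mathscr{H}_{A_j,q_j} = \mathscr{L}_{V_j}$.

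The adjoints are handled by exploiting that $A_j$ is purely imaginary: $\overline{A_j} = -A_j$, and by the very definition $\mathscr{H}^*_{A_j,q_j} = \mathscr{H}_{\overline{A_j},q_j} = \mathscr{H}_{-A_j,q_j}$. Redoing the previous computation with $-A_j$ in place of $A_j$ (and the same $q_j$) flips the first-order term to $-V_j\cdot\nabla$ and generates an extra $-\mathrm{div}\,V_j$ contribution, which combine to give $\mathscr{H}_{-A_j,q_j}u = \p_t^2 u - \Delta u - \mathrm{div}(V_j u) = \mathscr{L}^*_{V_j}u$. Together with the previous identity, this chain of operator identifications yields the three DN-map equalities $N_{\overline{A_j},q_j} = N_{-A_j,q_j} = N^*_{A_j,q_j}$, via uniqueness of the solution of the associated forward and backward Cauchy problems.

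For (\ref{equation  n 2.5}) I would observe that, thanks to $\mathscr{H}_{A_j,q_j} = \mathscr{L}_{V_j}$, the forward problems with common Dirichlet data $f$ share the same solution $u$. Comparing the definitions of the two DN maps at this $u$ gives
\begin{equation*}
(N_{A_j,q_j} - \Lambda_{V_j})f = iA_j\cdot\nu\, u_{|\Sigma} = iA_j\cdot\nu\, f.
\end{equation*}
The admissibility condition $V_j = V_0$ on $\Gamma$ forces $A_j = \tfrac{i}{2}V_0$ on $\Gamma$, a trace independent of $j$. Subtracting this identity for $j=2$ from that for $j=1$ annihilates the boundary contribution and produces the operator equality $N_{A_1,q_1} - N_{A_2,q_2} = \Lambda_{V_1} - \Lambda_{V_2}$ from $\mathcal{H}^1_0(\Sigma)$ to $L^2(\Sigma)$, whence (\ref{equation  n 2.5}) follows at once by taking operator norms. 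No step is really an obstacle; the only delicate point is the sign bookkeeping needed to identify the formal adjoint, which the purely imaginary choice of $A_j$ is specifically tailored to handle.
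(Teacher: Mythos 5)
Your computation of the operator identities is exactly the paper's: substituting $A_{j}=\tfrac{i}{2}V_{j}$ into $\Delta_{A}=\Delta+2iA\cdot\nabla+i\,\mathrm{div}\,A-A\cdot A$ and cancelling against $q_{j}$ gives $\mathscr{H}_{A_{j},q_{j}}=\mathscr{L}_{V_{j}}$, and the same computation with $-A_{j}$ gives $\mathscr{H}_{-A_{j},q_{j}}u=\p_{t}^{2}u-\Delta u-\mathrm{div}(V_{j}u)=\mathscr{L}^{*}_{V_{j}}u$; this part is correct. For the norm identity you take a more direct route than the paper: since $\mathscr{H}_{A_{j},q_{j}}$ and $\mathscr{L}_{V_{j}}$ are literally the same operator, the forward problems with the same Dirichlet datum $f$ have the same solution, so $(N_{A_{j},q_{j}}-\Lambda_{V_{j}})f=iA_{j}\cdot\nu\,f=-\tfrac12 V_{j}\cdot\nu\,f$, and the boundary multiplication term cancels in the difference because $V_{1}=V_{2}=V_{0}$ on $\Gamma$. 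The paper reaches the very same relation, but weakly: it pairs the forward solutions against backward adjoint solutions and integrates by parts twice (its identities (2.9) and (2.11)), obtaining $\int_{\Sigma}N_{A_{j},q_{j}}(f)\overline{g}=\int_{\Sigma}\Lambda_{V_{j}}(f)\overline{g}-\tfrac12\int_{\Sigma}V_{j}\cdot\nu\,f g$ for all admissible $g$. Your pointwise trace comparison is shorter and perfectly legitimate at the stated regularity (both maps are built from the one solution $u$, and $A_{j}\cdot\nu\,f\in L^{2}(\Sigma)$); what the paper's duality computation buys is that the same Green-formula machinery is set up once and reused later in the derivation of the key integral identity (2.24).

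One point where your justification is too thin: the equality $N_{-A_{j},q_{j}}=N^{*}_{A_{j},q_{j}}$ does not follow from ``uniqueness of the forward and backward Cauchy problems.'' As defined, the two maps are attached to different boundary-value problems (data vanishing at $t=0$ versus $t=T$, acting on $\mathcal{H}^{1}_{0}(\Sigma)$ versus $\mathcal{H}^{1}_{T}(\Sigma)$) and carry different boundary operators ($\p_{\nu}-iA_{j}\cdot\nu$ versus $\p_{\nu}+iA_{j}\cdot\nu$), so there is no single solution to which uniqueness could be applied. The identification of $N^{*}_{A_{j},q_{j}}$ as the adjoint of $N_{A_{j},q_{j}}$ under the $L^{2}(\Sigma)$ pairing is obtained from the magnetic Green formula (2.3), which is precisely the one-line argument the paper invokes; only the equality $N_{\overline{A}_{j},q_{j}}=N_{-A_{j},q_{j}}$ is immediate from $\overline{A}_{j}=-A_{j}$. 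This is a minor repair, and it does not affect your proof of the main identity $\|N_{A_{1},q_{1}}-N_{A_{2},q_{2}}\|=\|\Lambda_{V_{1}}-\Lambda_{V_{2}}\|$.
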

 \begin{proof}
 In light of (\ref{equation  n 2.4}), one can easily see that for any  $u,\,v\in H^{2}(Q)$ we have 
 \begin{equation}\label{equation  n 2.6}
 \mathscr{H}_{A_{j},q_{j}}u=(\p_{t}^{2}-\Delta _{A_{j}}+q_{j}(x))u=(\p_{t}^{2}-\Delta+V_{j}\cdot \nabla)u=\mathscr{L}_{V_{j}}u,
 \end{equation}
 and 
 \begin{equation}\label{equation  n 2.7}
 \mathscr{H}^{*}_{A_{j},q_{j}}v=(\p_{t}^{2}-\Delta_{\overline{A}_{j}}+q_{j}(x))v=(\p_{t}^{2}-\Delta_{(-A_{j})}+q_{j}(x))v=\p_{t}^{2}v-\Delta v -\mbox{div}(V_{j} v)= \mathscr{L}^{*}_{V_{j}}v.
 \end{equation} 
 A simple application of (\ref{equation n 2.3}) yields 
$N_{\overline{A},q}=N_{-A,q}=N_{A,q}^{*}$. 
 We move now to prove (\ref{equation  n 2.5}). Let us  denote by $u_{j}$ and $v_{j}$, $j=1,\,2$,  the solutions of
\begin{equation}\label{equation  n 2.8}
\left\{
  \begin{array}{ll}
  \mathscr{L}_{V_{j}}u_{j}=0 & \mbox{in} \,\,\,Q, \\
  \\
    u_{j|t=0}=\p_{t}u_{j|t=0}=0 & \mbox{in}\,\,\, \Omega,  \\
   \\
    u_{j}=f & \mbox{on} \,\,\,\Sigma,
  \end{array}
\right. ;\qquad \qquad \left\{
  \begin{array}{ll}
  \mathscr{L}^{*}_{V_{j}}v_{j}=0 & \mbox{in} \,\,\,Q, \\
  \\
    v_{j|t=T}=\p_{t}v_{j|t=T}=0 & \mbox{in}\,\,\, \Omega,  \\
   \\
    v_{j}=g & \mbox{on} \,\,\,\Sigma,
  \end{array}
\right.
\end{equation} 
where $f\in\mathcal{H}^{1}_{0}(\Sigma)$ and $g\in \mathcal{H}^{1}_{T}(\Sigma)$. 
By multiplying the first equation in the left hand side of (\ref{equation  n 2.8}) by $\overline{v}_{j}$ and integrating by parts, we get
\begin{equation}\label{equation  n 2.9}
 \int_{\Sigma}\Lambda_{V_{j}}(f) \overline{g}\,d\sigma\,dt=\int_{Q}\Big( -\p_{t}u_{j}\p_{t}\overline{v}_{j}+\nabla u_{j}\cdot \nabla \overline{v}_{j}+V_{j}\cdot \nabla u_{j}\,\overline{v}_{j}\Big) \,dx\,dt.
 \end{equation}
 On the other hand, based on (\ref{equation  n 2.6}) and (\ref{equation  n 2.7}), $u_{j}$  and $v_{j}$ with  $j=1,\,2$, are also solutions to 
 \begin{equation}\label{equation  n 2.10}
\left\{
  \begin{array}{ll}
  \mathscr{H}_{A_{j},q_{j}}u_{j}=0 & \mbox{in} \,\,\,Q, \\
  \\
    u_{j|t=0}=\p_{t}u_{j|t=0}=0 & \mbox{in}\,\,\, \Omega,  \\
   \\
    u_{j}=f & \mbox{on} \,\,\,\Sigma,
  \end{array}
\right.; \qquad\qquad \left\{
  \begin{array}{ll}
   \mathscr{H}^{*}_{A_{j},q_{j}}v_{j} =0 & \mbox{in} \,\,\,Q, \\
  \\
    v_{j|t=T}=\p_{t}v_{j|t=T}=0 & \mbox{in}\,\,\, \Omega,  \\
   \\
    v_{j}=g & \mbox{on} \,\,\,\Sigma.
  \end{array}
\right.
\end{equation}
By multiplying the equation in the left hand side of (\ref{equation  n 2.10}) by $\overline{v}_{j}$ and after  integrating  by parts, we get in light of (\ref{equation  n 2.4}) and (\ref{equation n 2.3}),
$$\int_{\Sigma}N_{A_{j},q_{j}}(f) \overline{g}\,d\sigma\,dt=\int_{Q}\Big( -\p_{t}u_{j}\p_{t}\overline{v}_{j}+\nabla u_{j}\cdot \nabla \overline{v}_{j}+\frac{1}{2}V_{j}\cdot \nabla u_{j} \overline{v}_{j}-\frac{1}{2}V_{j}\cdot \nabla \overline{v}_{j} u_{j}-\frac{1}{2}\mbox{div}V_{j}u_{j}\overline{v}_{j}\Big) \,dx\,dt.$$
This immediately implies that
\begin{equation}\label{equation  n 2.11}
\int_{\Sigma} N_{A_{j},q_{j}}(f) \overline{g}\,d\sigma\,dt=\int_{Q}\Big( -\p_{t}u_{j}\p_{t}\overline{v}_{j}+\nabla u_{j}\cdot \nabla \overline{v}_{j}+V_{j}\cdot \nabla u_{j}\overline{v}_{j}\Big) \,dx\,dt-\frac{1}{2}\int_{\Sigma}V_{j}\cdot \nu \,u_{j}\overline{v}_{j}\,d\sigma\,dt.
\end{equation}
Hence, from (\ref{equation  n 2.9}) and (\ref{equation  n 2.11}), we find out that
$$\int_{\Sigma} N_{A_{j},q_{j}}(f) \overline{g}\,d\sigma\,dt=\int_{\Sigma}\Lambda_{V_{j}}(f) \overline{g}\,d\sigma\,dt-\frac{1}{2}\int_{\Sigma}V_{j}\cdot \nu\,f\,g\,d\sigma\,dt.$$
Owing to the assumption that $V_{1}=V_{2}$ on $\Gamma$, we get the desired result.
 \end{proof}

Due to Lemma \ref{Lemma 2.1},  the inverse problem under investigation may be equivalently reformulated as to whether the magnetic potential $A$ and the electric potential $q$ in (\ref{equation n 2.2}) can be recovered from the knowledge of $N_{A,q}$. This is the auxiliary inverse problem that we address in the remaining of this section.

As it was noted in Sun \cite{[BIB23]}, the DN map is invariant under a gauge transformation. Namely, given any $\varphi\in \mathcal{C}^{2}(\overline{\Omega})$ with $\varphi_{|\Gamma}=0$, one has $N_{A+\nabla\varphi,q}=N_{A,q}$. Hence, the magnetic potential $A$ can not be uniquely determined by $N_{A,q}$. However it is possible to show that the knowledge of the DN map $N_{A,q}$ stably determines the electric potential $q$ and the magnetic field corresponding to the pure imaginary complex potential $A$ which  is given by the $2$-form $d\alpha_{A}$ defined as follows
$$d\alpha_{A}=\sum_{i,j=1}^{n} \Big(\frac{\p a_{i}}{\p x_{j}}-\frac{\p a_{j}}{\p x_{j}}  \Big)dx_{j}\wedge dx_{i}. $$
Actually, this problem is closely related to the one treated in Bellassoued and Ben Joud \cite{[BIB3]} in the absence of the electric potential, in  Bellassoued \cite{[BA6]}  in the Riemmanian case  
  and in Ben Joud \cite{[BIB9]}.  Compared with the paper of Ben Joud \cite{[BIB9]}, we formulate this auxiliary problem for \textit{less} regular \textit{complex} magnetic potentials.

Theorem \ref{Theorem1.1} can then be reduced to the following equivalent statement
\begin{theorem}\label{prop}
Let $A_{1},\,A_{2}\in \mathcal{A}(M,A_{0})$, and $q_{1},\,q_{2}\in \mathcal{Q}(M,q_{0})$.   Assume that $A_{1}-A_{2}\in W^{2,\infty}(\Omega,\mathbb{C}^{n})$ and  $q_{1}-q_{2}\in W^{1,\infty}(\Omega,\R).$ Then, there exist $C>0$ and $\mu\in(0,1)$ such that we have 
$$\|d\alpha_{{A_{1}}}-d\alpha_{A_{2}}\|_{H^{-1}(\Omega)}+\|q_{1}-q_{2}\|_{H^{-1}(\Omega)}\leq C\|N_{A_{2},q_{2}}-N_{A_{1},q_{1}}\|^{\mu}.$$ 
\end{theorem}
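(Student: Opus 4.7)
The plan is to combine an Alessandrini-type orthogonality identity with a geometric optics (GO) construction controlled by the elliptic Carleman estimate alluded to in the introduction, and then extract Fourier information on the unknowns via a transport/stationary-phase argument. First I would derive an integration-by-parts identity analogous to that in Lemma \ref{Lemma 2.1}. Writing the operator difference
$$\mathscr{H}_{A_1,q_1}-\mathscr{H}_{A_2,q_2}=-2i\tild A\cdot\nabla-i\,\dive\tild A+(A_1\cdot A_1-A_2\cdot A_2)+\tild q,$$
with $\tild A:=A_1-A_2$ and $\tild q:=q_1-q_2$, and using that $\tild A=0$ and $\tild q=0$ on $\Gamma$ (membership in $\mathcal A$ and $\mathcal Q$), one obtains a bilinear identity of the form
$$\int_\Sigma(N_{A_1,q_1}-N_{A_2,q_2})(f)\,\overline{g}\,d\sigma dt=\int_Q\cro{-2i\tild A\cdot\nabla u_1+\mbox{l.o.t.}}\overline{v_2}\,dxdt,$$
where $u_1$ solves $\mathscr{H}_{A_1,q_1}u_1=0$ with data $f\in\mathcal{H}^1_0(\Sigma)$ and $v_2$ solves $\mathscr{H}^*_{A_2,q_2}v_2=0$ with data $g\in\mathcal{H}^1_T(\Sigma)$.

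Next I would construct GO solutions. Fix $\omega\in\s^{n-1}$ and $\xi\in\omega^\perp$, and look for
$$u_1(x,t)=e^{-i\tau(x\cdot\omega+t)}\para{b_1(x)e^{-ix\cdot\xi}+\tau^{-1}r_1(x,t;\tau)},\quad v_2(x,t)=e^{-i\tau(x\cdot\omega+t)}\para{b_2(x)+\tau^{-1}r_2(x,t;\tau)},$$
where the $t$-independent amplitudes $b_j$ satisfy the transport equations $\omega\cdot(\nabla+iA_j)b_j=0$ coming from cancellation of the $O(\tau)$ terms in $\mathscr{H}_{A_j,q_j}u_j$. The remainders $r_j$ are built by solving an inhomogeneous wave equation with a prescribed Carleman weight: the elliptic Carleman estimate for the magnetic Laplacian yields $\norm{r_j}_{L^2(Q)}=O(1)$ uniformly in $\tau\gg1$, together with controlled trace bounds on $\Sigma$, so that the correction is genuinely of order $\tau^{-1}$.

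Substituting this GO pair into the orthogonality identity, the leading contribution is proportional to $\tau\int_Q\omega\cdot\tild A(x)\,b_1(x)\overline{b_2(x)}e^{-ix\cdot\xi}\,dxdt$. Dividing by $\tau$ and letting $\tau\to\infty$ gives
$$\abs{\int_\Omega\omega\cdot\tild A(x)\,b_1(x)\overline{b_2(x)}e^{-ix\cdot\xi}\,dx}\leq C\para{\norm{N_{A_1,q_1}-N_{A_2,q_2}}+\tau^{-1}}.$$
By a gauge-compatible normalization of the amplitudes (so that $b_1\overline{b_2}$ reduces to an explicit phase along the characteristic $\omega$), varying $\omega\in\s^{n-1}$ and $\xi\in\omega^\perp$ then yields control of the Fourier transform of the antisymmetric combination $d\alpha_{\tild A}$ on any ball of radius $\rho$. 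To capture $\tild q$, I would perform a gauge transformation that kills the recovered $d\alpha_{\tild A}$ modulo an already-controlled error, and then inspect the $O(1)$ term in $\tau$ of the identity, which isolates the Fourier transform of $\tild q$ on the same hyperplanes.

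Finally, the passage from bounded-frequency Fourier control to $H^{-1}(\Omega)$ Hölder bounds is completed by a standard low/high splitting: for $h\in\set{d\alpha_{\tild A},\tild q}$,
$$\norm{h}^2_{H^{-1}(\Omega)}\leq\int_{\abs{\xi}\leq\rho}(1+\abs{\xi}^2)^{-1}\abs{\hat h(\xi)}^2\,d\xi+C\rho^{-2}\norm{h}^2_{L^2(\Omega)},$$
then bound the low-frequency piece by a polynomial in $\rho$ times $\norm{N_{A_1,q_1}-N_{A_2,q_2}}$, bound the tail by the a priori $L^2$ control (where the $W^{2,\infty}$ regularity of $\tild A$ and $W^{1,\infty}$ of $\tild q$ enter), and optimize $\rho$ to extract the Hölder exponent $\mu$. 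The main obstacle will be the Carleman construction of the remainders $r_j$ with sharp $\tau^{-1}$ decay under the low regularity $A_j\in W^{1,\infty}$---in particular, handling the boundary traces that appear when pairing the GO solutions---together with the precise bookkeeping needed to isolate the gauge-invariant combination $d\alpha_{\tild A}$ from the raw transport integral $\omega\cdot\tild A$, a step that is delicate because the ``amplitude trick'' must be compatible with both forward and backward transport equations simultaneously.
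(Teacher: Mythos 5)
Your overall skeleton (Alessandrini-type identity, geometric optics pair, ray-transform/Fourier control of the antisymmetric part of $A_1-A_2$, a gauge/Hodge step to isolate $q$, then a low/high frequency split optimized in the large parameter) is the same as the paper's. But two of your key technical steps would fail as written. First, your GO ansatz uses time-independent amplitudes $b_j(x)$ solving $\omega\cdot(\nabla+iA_j)b_j=0$. Such a principal term does not vanish in $\Omega$ at $t=0$ (nor at $t=T$ for the adjoint solution), so the zero initial conditions required for $u_1$ in the definition of $N_{A_1,q_1}$ (and the zero final conditions for $v_2$ in the adjoint problem) cannot hold unless the remainder absorbs initial data whose energy grows like $\tau$; then the remainder is no longer $O(\tau^{-1})$ and the leading term in the identity is not isolated. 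The paper avoids this precisely by taking moving-bump amplitudes $\phi(x,t)=\varphi(x+t\omega)$ with $\mathrm{supp}\,\varphi\subset\mathcal D_\rho$, $\mathrm{supp}\,\varphi\cap\Omega=\emptyset$ and $(\mathrm{supp}\,\varphi\pm T\omega)\cap\Omega=\emptyset$, which is where the hypothesis $T>\mathrm{Diam}\,\Omega+4\rho$ enters; this makes the principal part compatible with zero initial and final data, and the leftover information is a ray transform of $\omega\cdot A$ rather than a direct Fourier transform.

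Second, your remainder construction invokes ``the elliptic Carleman estimate for the magnetic Laplacian'' to get $\|r_j\|_{L^2(Q)}=O(1)$: the remainder solves a hyperbolic initial-boundary value problem, and an elliptic Carleman estimate neither produces it nor gives bounds uniform in the large parameter. In the paper the bound (Lemma 2.5, estimate (2.16)) comes from standard hyperbolic energy estimates after writing the source as a time derivative of the oscillating phase and integrating by parts in $t$; and, crucially, this only works after replacing the $W^{1,\infty}$ potential by the mollified $A^\sharp_\lambda=\chi_\lambda\ast\tilde A$ with the quantitative bounds $\|A-A^\sharp_\lambda\|_{L^\infty}\leq C\lambda^{-\alpha}$ and $\|\partial^\gamma A^\sharp_\lambda\|_{L^\infty}\leq C\lambda^{\alpha(|\gamma|-1)}$. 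You correctly flag the low regularity of $A_j$ as the main obstacle but give no mechanism to overcome it; this mollification is the mechanism, and it is also what generates the $\lambda^{\delta}$ and $\lambda^{-\beta}$ losses that, together with the concentration argument ($\varphi_h$ approximating a Dirac mass and the elementary inequality $|X|\leq e^{M}|e^{X}-1|$ to remove the unknown exponential factor $b_1\overline{b_2}$ multiplying $\omega\cdot A$), determine the final H\"older exponent. Note finally that in the paper the elliptic Carleman estimate plays no role in the proof of this theorem; it is used only in Section 3 to pass from this auxiliary result to Theorem 1.1.
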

The above theorem claims stable determination of the magnetic field $d\alpha_{A}$  and the electric potential $q$ from the global Neumann measurement $N_{A,q}$.  Here we improve the result of Ben Joud \cite{[BIB9]} by considering  complex magnetic  potentials. The regularity condition imposed on admissible magnetic potentials is also weakened from $W^{3,\infty}(\Omega)$ to $W^{1,\infty}(\Omega)$.  

The rest of this section is devoted to proving this auxiliary result.
\subsection{Geometrical optics solutions}
Section \ref{Sec2} mainly aims at the study of the auxiliary inverse problem associated with the electro-magnetic wave equation (\ref{equation n 2.2}), that is the identification of $d\alpha_{A}$ and $q$ from the DN map $N_{A,q}$. To begin with, we shall first construct   geometrical optics solutions for the equation (\ref{equation n 2.2}) associated  with a suitable smooth approximation of the magnetic potential (see \cite{[BIB6],[BIB18]}). For this purpose,  we first consider $\varphi\in \mathcal{C}_{0}^{\infty}(\R^{n})$ and notice that for all  $\omega\in \mathbb{S}^{n-1}$ the function  
\begin{equation}
\label{equation  n 2.12}
\phi(x,t)=\varphi(x+t\omega),
\end{equation}
solves the following transport equation 
$$(\p_{t}-\omega\cdot \nabla)\phi(x,t)=0.$$ 
 We will build solutions associated with a suitable smooth approximation of the magnetic potential. This requires to extend the potentials  $A_{1},\,A_{2}\in \mathcal{A}(A_{0},M)$ to a larger domain as follows:
\begin{Lemm}(see\cite{[BIB24]})
Let $\Omega$ be a bounded domain that is compactly contained in $\tilde{\Omega}\subset \R^{n}$. Let  $A_{1},\,A_{2}\in W^{1,\infty}(\Omega)$ such that $\|A_{j}\|_{W^{1,\infty}(\Omega)}\leq M$, $j=1,\,2$ and $A_{1}=A_{2}$ on $\Gamma$. Then, there exist two extensions $\tilde{A}_{1},\,\tilde{A}_{2}\in W^{1,\infty}_{c}(\tilde{\Omega})$,  such that $\tilde{A}_{1}=\tilde{A}_{2}$ on $\tilde{\Omega}\,\setminus \Omega$. Moreover, there exists a positive constant $C>0$  such that 
$$\|\tilde{A}_{j}\|_{W^{1,\infty}(\tilde{\Omega})}\leq C\,M,\quad j=1,\,2.$$
Here $C$ is depends only on $\Omega$, $\tilde{\Omega}$ and $M$.
\end{Lemm}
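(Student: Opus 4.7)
The plan is to use two standard techniques in sequence: first produce an extension $\tilde{A}_2$ of $A_2$ via a classical Sobolev extension operator combined with a cutoff, and then obtain $\tilde{A}_1$ by adding to $\tilde{A}_2$ the zero extension of the difference $W := A_1 - A_2$, which vanishes on $\Gamma$ by hypothesis. With this setup, the requirement $\tilde{A}_1 = \tilde{A}_2$ on $\tilde{\Omega}\setminus\Omega$ becomes automatic, and the global norm bound reduces to controlling the two ingredients separately.

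For the first step, the assumed regularity of $\Gamma$ lets me invoke a classical extension theorem (for example, Stein's total extension operator) to produce a bounded linear map $E : W^{1,\infty}(\Omega) \to W^{1,\infty}(\R^n)$ satisfying $\|E(A_2)\|_{W^{1,\infty}(\R^n)} \le C_1 \|A_2\|_{W^{1,\infty}(\Omega)} \le C_1 M$, with $C_1$ depending only on $\Omega$. To enforce compact support in $\tilde{\Omega}$, I fix a cutoff $\chi \in \mathcal{C}^\infty_c(\tilde{\Omega})$ with $\chi \equiv 1$ on a neighbourhood of $\overline{\Omega}$, and set $\tilde{A}_2 := \chi\, E(A_2)$. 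The Leibniz rule together with the $L^\infty$ bounds on $\chi$ and $\nabla\chi$ yields $\tilde{A}_2 \in W^{1,\infty}_c(\tilde{\Omega})$ with $\|\tilde{A}_2\|_{W^{1,\infty}(\tilde{\Omega})} \le C M$, where $C$ depends only on $\Omega$ and $\tilde{\Omega}$.

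For the second step, I view $W \in W^{1,\infty}(\Omega)$, through the identification valid on Lipschitz domains, as a Lipschitz function on $\overline{\Omega}$ with Lipschitz constant bounded by $\|W\|_{W^{1,\infty}(\Omega)} \le 2M$, vanishing on $\Gamma$. Let $\tilde{W}$ be its extension to $\R^n$ by zero and define $\tilde{A}_1 := \tilde{A}_2 + \tilde{W}$. By construction, $\tilde{A}_1 = \tilde{A}_2 + W = A_2 + W = A_1$ on $\Omega$ (using $\chi \equiv 1$ on a neighbourhood of $\overline{\Omega}$), $\tilde{A}_1 = \tilde{A}_2$ on $\tilde{\Omega}\setminus\Omega$, and $\tilde{A}_1$ is compactly supported in $\tilde{\Omega}$.

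The only non-routine point is verifying that $\tilde{W} \in W^{1,\infty}(\R^n)$ with $\|\tilde{W}\|_{W^{1,\infty}(\R^n)} \le C M$; this will be the main obstacle. My plan is to show $\tilde{W}$ is globally Lipschitz. The cases $x,y\in\Omega$ and $x,y\in\R^n\setminus\Omega$ are immediate, so the key case is $x \in \Omega$ and $y \in \R^n\setminus\Omega$. Picking $z \in \Gamma$ with $|x-z| = d(x,\Gamma)$ and using $W(z)=0$, I would estimate
\[
|\tilde{W}(x) - \tilde{W}(y)| = |W(x)| = |W(x) - W(z)| \le 2M\, d(x,\Gamma) \le 2M\,|x-y|,
\]
where the last inequality uses the standard identity $d(x,\R^n\setminus\Omega) = d(x,\Gamma)$ for $x$ in the open set $\Omega$. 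This shows $\tilde{W}$ is globally Lipschitz with constant $\le 2M$, hence belongs to $W^{1,\infty}(\R^n)$ with norm bounded by $CM$. Combining the two estimates yields $\|\tilde{A}_j\|_{W^{1,\infty}(\tilde{\Omega})} \le CM$ for $j=1,2$, with $C$ depending only on $\Omega$, $\tilde{\Omega}$, and $M$, completing the construction.
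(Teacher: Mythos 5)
The paper does not actually prove this lemma: it is stated with a pointer to Tzou's paper \cite{[BIB24]} and no argument is given, so there is nothing internal to compare your proof against. On its own terms, your construction is correct and complete. The decomposition $\tilde{A}_1 = \tilde{A}_2 + \tilde{W}$, with $\tilde{A}_2$ built from a Stein-type extension times a cutoff and $\tilde{W}$ the zero extension of $A_1-A_2$, delivers exactly the three required properties (compact support in $\tilde{\Omega}$, agreement of the two extensions outside $\Omega$, and the uniform $W^{1,\infty}$ bound), and you correctly isolate the one non-trivial step: that the zero extension of a Lipschitz function vanishing on $\Gamma$ is globally Lipschitz, which your estimate via the nearest boundary point $z$ and the identity $d(x,\Gamma)=d(x,\R^n\setminus\Omega)$ handles cleanly. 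Two minor points worth making explicit: the identification of $W^{1,\infty}(\Omega)$ with $\mathcal{C}^{0,1}(\overline{\Omega})$ that you invoke requires $\Omega$ to be quasiconvex (so the Lipschitz constant of $W$ is controlled by $C(\Omega)\|W\|_{W^{1,\infty}(\Omega)}$ rather than by the norm itself), which is guaranteed here by the smoothness of $\Gamma$ but silently contributes to the constant $C$; and one should note that a bounded globally Lipschitz function on $\R^n$ lies in $W^{1,\infty}(\R^n)$ with comparable norm (Rademacher), so that $\tilde{W}\in W^{1,\infty}_c(\tilde{\Omega})$ indeed follows. Neither affects the validity of the argument.
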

Let $\chi\in \mathcal{C}^{\infty}_{c}(\R^{n})$ such that Supp $\chi\subset\! B(0,1)$, $0\leq \chi\leq 1$, and $\displaystyle\int_{\R^{n}}\chi(x)\,dx=1$.  For a sufficiently large $\lambda>0$, we denote $\chi_{\lambda}(x)=\lambda^{n\color{black}{\alpha}\color{black}}\chi(\lambda^{\color{black}\alpha}\color{black} x)$, with \color{black} $0<\alpha\leq 1/2 $\color{black}. For $j=1,\,2$, we define the smooth approximations $A^{\sharp}_{j,\lambda}$ of the extensions $\tilde{A}_{j}\in W^{1,\infty}_{c}(\R^{n},\mathbb{C})$ as follows:
\begin{equation}\label{equation  n 2.13}
A^{\sharp}_{j,\lambda}:= \chi_{\lambda}\ast \tilde{A}_{j},\quad j=1,\,2.
\end{equation}
This terminology is justified by the fact that  $A^{\sharp}_{j,\lambda}$ gets closer to $\tilde{A}_{j}$ as $\lambda$ goes to $\infty$. This can be seen from the following result:
\begin{Lemm}
Let $\tilde{A}\in W^{1,\infty}_{c}(\R^{n},\mathbb{C}^{n})$ be  such that $\|\tilde{A}\|_{W^{1,\infty}(\R^{n})}\leq M$. Then, there exists a positive constant $C$ depending only on $M$ and $\Omega$ such that for all $\lambda>0$ we have 
\begin{equation}\label{equation  n 2.14}
\|\tilde{A}-A^{\sharp}_{\lambda}\|_{L^{\infty}(\R^{n})}\leq C \lambda^{-\alpha}.
\end{equation}
Moreover, for any multi-index $\gamma\in\mathbb{N}^{n}$, with $|\gamma|\geq 1$, we have 
\begin{equation}\label{equation  n 2.15}
 \|\p^{\gamma}A^{\sharp}_{\lambda}\|_{L^{\infty}(\R^{n})}\leq C \lambda^{\alpha(|\gamma|-1)},
\end{equation}
where $C$ is a positive constant  depedning only on $M$ and $\Omega$.
\end{Lemm}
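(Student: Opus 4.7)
The plan is to handle the two bounds separately using standard mollifier machinery, exploiting that $\chi_{\lambda}(x)=\lambda^{n\alpha}\chi(\lambda^{\alpha}x)$ is supported in the ball $\{|x|\leq\lambda^{-\alpha}\}$ and satisfies $\int_{\R^{n}}\chi_{\lambda}=1$. As a preliminary observation, for any multi-index $\beta\in\mathbb{N}^{n}$, a direct change of variables yields
\begin{equation*}
\|\p^{\beta}\chi_{\lambda}\|_{L^{1}(\R^{n})}=\lambda^{\alpha|\beta|}\|\p^{\beta}\chi\|_{L^{1}(\R^{n})}.
\end{equation*}

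For (\ref{equation  n 2.14}), I would use that $\tilde{A}\in W^{1,\infty}_{c}(\R^{n})$ is Lipschitz with constant bounded by a multiple of $M$. Writing
\begin{equation*}
A^{\sharp}_{\lambda}(x)-\tilde{A}(x)=\int_{\R^{n}}\chi_{\lambda}(y)\bigl(\tilde{A}(x-y)-\tilde{A}(x)\bigr)\,dy,
\end{equation*}
I would bound the integrand pointwise by $CM|y|$ and note that on the support of $\chi_{\lambda}$ one has $|y|\leq\lambda^{-\alpha}$. Combined with $\|\chi_{\lambda}\|_{L^{1}(\R^{n})}=1$, this yields the claimed bound.

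For (\ref{equation  n 2.15}) with $|\gamma|\geq 1$, the key trick is to transfer exactly one derivative onto $\tilde{A}$ and leave the remaining $|\gamma|-1$ derivatives on $\chi_{\lambda}$. Picking any index $i$ with $\gamma_{i}\geq 1$ and writing $e_{i}$ for the corresponding unit vector, I would use
\begin{equation*}
\p^{\gamma}A^{\sharp}_{\lambda}=\p^{\gamma-e_{i}}\chi_{\lambda}\,\ast\,\p_{i}\tilde{A},
\end{equation*}
and then apply Young's convolution inequality together with the preliminary observation to conclude
\begin{equation*}
\|\p^{\gamma}A^{\sharp}_{\lambda}\|_{L^{\infty}(\R^{n})}\leq\|\p^{\gamma-e_{i}}\chi_{\lambda}\|_{L^{1}(\R^{n})}\|\p_{i}\tilde{A}\|_{L^{\infty}(\R^{n})}\leq CM\lambda^{\alpha(|\gamma|-1)}.
\end{equation*}

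There is no serious obstacle in this argument; everything reduces to a careful bookkeeping of the mollifier scaling. The only point requiring vigilance is that distributing all derivatives onto $\chi_{\lambda}$ would produce a factor $\lambda^{\alpha|\gamma|}$, which is worse than claimed; using the $W^{1,\infty}$ regularity of $\tilde{A}$ to absorb one derivative is precisely what produces the sharper exponent $\alpha(|\gamma|-1)$ and will later be crucial in the geometric optics construction.
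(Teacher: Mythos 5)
Your proof is correct, and the first estimate (\ref{equation  n 2.14}) is argued essentially as in the paper: Lipschitz continuity of $\tilde{A}$ (with constant controlled by $M$) combined with $\int\chi_{\lambda}=1$ and the $\lambda^{-\alpha}$ scale of the mollifier; the paper changes variables and uses $\int|y||\chi(y)|\,dy$ where you use the support bound $|y|\leq\lambda^{-\alpha}$, which is immaterial. For (\ref{equation  n 2.15}) you take a genuinely different, equally valid route. You transfer exactly one derivative onto $\tilde{A}$, writing $\p^{\gamma}A^{\sharp}_{\lambda}=\p^{\gamma-e_{i}}\chi_{\lambda}\ast\p_{i}\tilde{A}$ (legitimate since $\p_{i}\tilde{A}\in L^{\infty}$ and $\chi_{\lambda}\in\mathcal{C}^{\infty}_{c}$), and conclude by Young's inequality with $\|\p^{\gamma-e_{i}}\chi_{\lambda}\|_{L^{1}}=\lambda^{\alpha(|\gamma|-1)}\|\p^{\gamma-e_{i}}\chi\|_{L^{1}}$. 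The paper instead does what your closing remark warns against -- it puts all $|\gamma|$ derivatives on the mollifier, producing $\lambda^{\alpha|\gamma|}$ -- but then repairs the apparent loss using the cancellation $\int\p^{\gamma}\chi=0$, which permits subtracting $\tilde{A}(x)$ inside the integral and invoking the Lipschitz bound $|\tilde{A}(x-\lambda^{-\alpha}z)-\tilde{A}(x)|\leq C\lambda^{-\alpha}|z|$ to recover one factor of $\lambda^{-\alpha}$. Both arguments spend the $W^{1,\infty}$ regularity exactly once and give the same exponent; yours is slightly shorter via the weak-derivative identity, while the paper's zero-mean trick uses only the modulus of continuity of $\tilde{A}$ and so would carry over verbatim to merely H\"older-continuous potentials. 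In both cases the constant also depends on $\chi$ and the multi-index $\gamma$, which is harmless since only finitely many $\gamma$ are used later.
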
 
\begin{proof}
From \cite{[BIB14]}, we have $\tilde{A}\in \mathcal{C}^{0,1}(\R^{n})$ and  $\|\tilde{A}\|_{\mathcal{C}^{0,1}(\R^{n})}\leq \|\tilde{A}\|_{W^{1,\infty}(\R^{n})}$, thus in light of (\ref{equation  n 2.13}), we have 
$$\begin{array}{lll}
|\tilde{A}-A^{\sharp}_{\lambda}|&=&\Big|\displaystyle\int_{\R^{n}}\tilde{A}(x-y) \chi_{\lambda}(y)\,dy-\tilde{A}(x)  \Big|\\
&=&\Big| \displaystyle\int_{\R^{n}}\tilde{A}(x-y) \chi(\lambda^{\alpha }y ) \lambda^{n\alpha}\,dy-\tilde{A}(x) \Big|\\
&=&\Big|\displaystyle\int_{\R^{n}}\tilde{A}(x-\lambda^{-\alpha}y)\chi(y)-\tilde{A}(x) \chi(y)\,\,dy   \Big|\\
&\leq&\|\tilde{A}\|_{\mathcal{C}^{0,1}(\R^{n})}\lambda^{-\alpha}\displaystyle\int_{\R^{n}}|y||\chi(y)|\,dy\\
&\leq & C \lambda^{-\alpha},
\end{array}$$
which completes the proof of the first estimate (\ref{equation  n 2.14}). We move now to prove (\ref{equation  n 2.15}) . We should first notice that for all multi-index $\gamma\in\mathbb{N}^{n}$ such that $|\gamma|\geq 1$, we have 
$$\int_{\R^{n}}\p^{\gamma}\chi(y)\,dy=0.$$
Thus, from the above observation, we find
$$\begin{array}{lll}
|\p^{\gamma}A_{\lambda}^{\sharp}(x)|&=&\Big| \displaystyle\int_{\R^{n}}\tilde{A}(y)\lambda^{n\alpha}\p^{\gamma}\Big(\chi (\lambda^{\alpha}(x-y))\Big)\,dy  \Big|\\
&=&\Big|\displaystyle\int_{\R^{n}} \tilde{A}(x-\lambda^{-\alpha}z)\lambda^{\alpha |\gamma|}\,(\p^{\gamma}\chi)(z)\,dz \Big|\\
&=&\Big| \displaystyle\int_{\R^{n}} \Big( \tilde{A}(x-\lambda^{-\alpha}z)-A(x)\Big)\,\lambda^{\alpha |\gamma|}\,(\p^{\gamma}\chi)(z)\,dz  \Big|\\
&\leq& \|\tilde{A}\|_{\mathcal{C}^{0,1}(\R^{n})}\lambda^{\alpha(|\gamma|-1)}\displaystyle\int_{\R^{n}}|z| |(\p^{\alpha}\chi)(z)|\,dz\\
&\leq & C \lambda^{\alpha(|\gamma|-1)}.
\end{array}$$
This completes the proof of the Lemma.
\end{proof}
The coming statement claims the existence of particular solutions to the equation (\ref{equation n 2.2}). In the rest of this subsection, we will consider $A$ to be extended as $\tilde{A}$ outside $\Omega$. We denote by $A$ this extension.
\begin{Lemm}\label{Lemma 2.5} (see \cite{[BIB9]}) Given $\omega\in\mathbb{S}^{n-1}$ and  $\varphi\in \mathcal{C}^{\infty}_{0}(\R^{n})$.  Let $A\in W^{1,\infty}(\Omega)$ and $q\in L^{\infty}(\Omega)$. We consider the function $\phi$ defined by (\ref{equation  n 2.12}).  Then, for  any $\lambda>0$ the equation $\mathscr{H}_{A,q}u=0$ in $Q$ admits a solution 
   $$u\in \mathcal{C}([0,T];H^{1}(\Omega))\cap \mathcal{C}^{1}([0,T];L^{2}(\Omega)),$$
 of the form 
$$u(x,t)=\phi(x,t)\,b^{\sharp}_{\lambda}(x,t)\,e^{i\lambda(x\cdot \omega+t)}+r(x,t),$$
where 
$$b_{\lambda}^{\sharp}(x,t)=\exp \Big(i\int_{0}^{t} \omega\cdot A_{\lambda}^{\sharp} (x+s\omega)\,ds\Big).$$
Here  $A_{\lambda}^{\sharp}$ is given by (\ref{equation  n 2.13}) and the correction term $r$ satisfies
$$r(x,0)=\p_{t}r(x,0)=0,\,\,\mbox{in}\,\,\Omega,\quad r(x,t)=0\,\,\mbox{on}\,\,\Sigma.$$
Moreover, there exists a positive constant $C>0$ such that 
\begin{equation}\label{equation n 2.16}
\lambda^{\alpha} \|r\|_{L^{2}(Q)}+\lambda^{\alpha-1}\|\nabla r\|_{L^{2}(Q)}\leq C \|\varphi\|_{H^{3}(\R^{n})},\qquad 0< \alpha\leq 1/2.
\end{equation}
\end{Lemm}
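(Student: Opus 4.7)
The strategy is a WKB construction: look for $u = a + r$ with the explicit amplitude $a(x,t) = \phi(x,t)\, b^{\sharp}_\lambda(x,t)\, e^{i\lambda(x\cdot\omega + t)}$ and $r$ a remainder absorbing the error. Writing $\psi = x\cdot\omega+t$ and $g = \phi\, b^{\sharp}_\lambda$, a direct calculation gives
\begin{equation*}
\mathscr{H}_{A,q}a = e^{i\lambda\psi}\!\left[\, 2i\lambda\bigl\{(\partial_t - \omega\cdot\nabla)g - i\,\omega\cdot A\, g\bigr\} + \mathscr{H}_{A,q}g \,\right],
\end{equation*}
the $\lambda^{2}$ contributions cancelling because $|\omega|^{2}=1$. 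Now $(\partial_t-\omega\cdot\nabla)\phi=0$, and $b^{\sharp}_\lambda$ is designed (by direct integration of an ODE along $s\mapsto x+s\omega$) so that $(\partial_t - \omega\cdot\nabla)b^{\sharp}_\lambda = i\,\omega\cdot A^{\sharp}_\lambda(x)\, b^{\sharp}_\lambda$; the $O(\lambda)$ bracket therefore collapses to $-i\,\omega\cdot(A - A^{\sharp}_\lambda)(x)\, g$, which by the smoothing bound (2.14) is pointwise of size $\lambda^{-\alpha}$. The remaining term $\mathscr{H}_{A,q}g$ involves two spatial derivatives of $g$, whose worst contribution $\Delta b^{\sharp}_\lambda$ is of size $\lambda^\alpha$ by (2.15).

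Setting $F_\lambda := -\mathscr{H}_{A,q} a$, the above bookkeeping yields $\|F_\lambda\|_{L^2(Q)} \leq C(\lambda^{1-\alpha} + \lambda^\alpha)\|\varphi\|_{H^2(\R^n)}$, so for $\alpha\in(0,1/2]$ the $\lambda^{1-\alpha}$ term dominates. Define $r$ as the unique solution of
\begin{equation*}
\mathscr{H}_{A,q} r = F_\lambda \text{ in } Q, \qquad r_{|t=0} = \partial_t r_{|t=0} = 0, \qquad r_{|\Sigma} = 0,
\end{equation*}
whose existence in $\mathcal{C}([0,T];H^{1}_{0}(\Omega)) \cap \mathcal{C}^{1}([0,T]; L^{2}(\Omega))$ follows from the well-posedness of (2.2) already recalled. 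Then $u = a + r$ solves $\mathscr{H}_{A,q}u = 0$ in $Q$ and $r$ has the claimed traces.

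For (2.16), the standard magnetic energy identity (multiply by $\overline{\partial_t r}$, integrate by parts, use the Green formula (2.3)) produces $\|\partial_t r\|_{L^\infty(0,T;L^2)} + \|\nabla r\|_{L^\infty(0,T;L^2)} \leq C\|F_\lambda\|_{L^1(0,T;L^2)}$, hence $\|\nabla r\|_{L^2(Q)} \leq C\lambda^{1-\alpha}\|\varphi\|_{H^2}$, giving the second piece of (2.16). The main obstacle is the first piece $\lambda^\alpha\|r\|_{L^2}$: a naive Poincar\'e bound only yields $\|r\|_{L^2(Q)} \leq C\lambda^{1-\alpha}$, which is short by a factor $\lambda$. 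To recover this factor one must exploit the oscillation of $F_\lambda$ via one integration by parts in time: using $e^{i\lambda\psi} = (i\lambda)^{-1}\partial_t e^{i\lambda\psi}$, write $F_\lambda = \partial_t G_\lambda + \tilde R_\lambda$ with $\|G_\lambda\|_{L^2(Q)} \leq C\lambda^{-1}\|F_\lambda\|_{L^2(Q)}$ and a controlled $\tilde R_\lambda$. Since $A$ and $q$ do not depend on time, $\partial_t$ commutes with $\mathscr{H}_{A,q}$, so solving $\mathscr{H}_{A,q}\tilde r = G_\lambda$ with zero Cauchy/Dirichlet data and then using $\partial_t \tilde r = r$ up to a lower order correction from the initial-time mismatch (itself controlled by another energy estimate) delivers $\|r\|_{L^2(Q)} \leq C\lambda^{-\alpha}\|\varphi\|_{H^3}$. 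The extra derivative of $\varphi$, hence the $H^3$-norm in (2.16), is precisely the price paid by the time integration by parts, since $\partial_t H$ (with $H$ the $\lambda$-independent part of the amplitude entering $F_\lambda$) involves one more derivative of $\phi(x,t)=\varphi(x+t\omega)$.
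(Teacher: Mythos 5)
Your construction is correct and essentially reproduces the paper's proof: the same WKB ansatz with the approximate transport factor $b^{\sharp}_{\lambda}$, the same identification of the dominant error $2\lambda\,\omega\cdot(A^{\sharp}_{\lambda}-A)\,\phi\, b^{\sharp}_{\lambda}$ of size $\lambda^{1-\alpha}$ via the mollification bounds (2.14)--(2.15), the same definition of $r$ through the inhomogeneous problem with zero data, and the same energy estimate giving $\|\nabla r\|_{L^{2}(Q)}\lesssim\lambda^{1-\alpha}\|\varphi\|_{H^{3}}$. Your handling of the $\lambda^{\alpha}\|r\|_{L^{2}(Q)}$ bound --- writing $F_{\lambda}=\partial_{t}G_{\lambda}+\tilde R_{\lambda}$, differentiating the auxiliary solution in time (using time-independence of $A,q$) and paying one extra derivative of $\varphi$, with the initial-time mismatch absorbed by a further energy estimate --- is the mirror image of the paper's device of setting $w(x,t)=\int_{0}^{t}r(x,s)\,ds$ and integrating the oscillating source $\int_{0}^{t}g\,ds$ by parts in $s$, so the two arguments coincide in substance.
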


\begin{proof}
In order to prove this lemma, it will be enough to show that if $r$ solves the following equation
\begin{equation}\label{equation n 2.17}
\left\{
  \begin{array}{ll}
 \mathscr{H}_{A,q}r=g, & \mbox{in} \,\,\,Q, \\
  \\
    r_{|t=0}=\p_{t}r_{|t=0}=0, & \mbox{in}\,\,\, \Omega,  \\
 \\  
    r=0, & \mbox{on} \,\,\,\Sigma,
  \end{array}
\right.
\end{equation}
then the estimate (\ref{equation n 2.16}) is satisfied.  Here the function $g$ is given by
$$\begin{array}{lll}
g(x,t)&=&-e^{i\lambda(x\cdot\omega+t)}(\p_{t}^{2}-\Delta_{A}+q(x))(\phi\,b^{\sharp}_{\lambda})(x,t)\\

&&- 2i\lambda e^{i\lambda(x\cdot \omega+t)}b^{\sharp}_{\lambda}(x,t)(\p_{t}-\omega\cdot\nabla )\phi(x,t)\\

&&-2i\lambda e^{i\lambda(x\cdot \omega+t)} \phi(x,t)(\p_{t}-\omega\cdot \nabla  -i(\omega\cdot A))b^{\sharp}_{\lambda}(x,t).
\end{array}$$
This and the fact that $\phi$ satisfies (\ref{equation  n 2.12}) and  $b^{\sharp}_{\lambda}$ solves the following equation 
$$(\p_{t}-\omega\cdot \nabla -i(\omega\cdot A^{\sharp}_{\lambda}))b^{\sharp}_{\lambda}=0,$$
immediately implies that 
$$g(x,t)=-e^{i\lambda(x\cdot\omega+t)}(\p_{t}^{2}-\Delta_{A}+q(x))(\phi\,b^{\sharp}_{\lambda})(x,t)+2\lambda e^{i\lambda(x\cdot \omega+t)}\omega\cdot (A^{\sharp}_{\lambda}-A)(x)(b^{\sharp}_{\lambda}\phi)(x,t),$$
with  $g\in L^{1}(0,T;L^{2}(\Omega))$. By setting  $w(x,t)=\displaystyle\int_{0}^{t}r(x,s)\,ds$, one can see that $w$ solves the hyperbolic problem (\ref{equation n 2.17}) with the right hand side
\begin{eqnarray}\label{equation n 2.18}
F(x,t)=\displaystyle\int_{0}^{t}\!\!\!\!\!\!\!&&\!\!\!\!\!\!\!g(x,s)\,ds=\displaystyle\int_{0}^{t}-e^{i\lambda(x\cdot \omega+s)}(\p_{t}^{2}-\Delta _{A}+q(x))(\phi \,b^{\sharp}_{\lambda})(x,s)\,ds\cr
&+&\displaystyle\int_{0}^{t}2\lambda e^{i\lambda(x\cdot \omega+s)}\omega\cdot (A^{\sharp}_{\lambda}-A)(x)(\phi\,b^{\sharp}_{\lambda})(x,s)\,ds
= F_{1}(x,t)+F_{2}(x,t).
\end{eqnarray}
Let us put  
\begin{equation}\label{equation n 2.19}
g_{1}(x,s)=(\p_{t}^{2}-\Delta_{A}+q(x))(\phi\,b^{\sharp}_{\lambda})(x,s),\quad \mbox{and}\quad g_{2}(x,s)=2\lambda\omega\cdot (A_{\lambda}^{\sharp}-A)(x)(\phi\,b^{\sharp}_{\lambda})(x,s).
\end{equation}
 In light of (\ref{equation n 2.18}) and (\ref{equation n 2.19}), we have 
 $$F_{1}(x,t)=-\frac{1}{i\lambda}\int_{0}^{t}g_{1}(x,s)\p_{s}(e^{i\lambda(x\cdot \omega+s)})\,ds.$$
 Thus, by integrating by parts with respect to $s$ and using (\ref{equation  n 2.15}), we get
\begin{equation}\label{equation n 2.20}
\|F_{1}\|^{2}_{L^{2}(Q)}
\leq\displaystyle\frac{C}{\lambda^{2}}\Big(\|g_{1}\|_{L^{2}(Q)}^{2}+T\|g_{1}(\cdot,0)\|_{L^{2}(\Omega)}^{2}+T \|\p_{s}g_{1}\|^{2}_{L^{2}(Q)}   \Big)\leq \displaystyle\frac{C}{\lambda^{2-\alpha}}\|\varphi\|_{H^{3}(\R^{n})}^{2}.
\end{equation}
On the other hand, in view of (\ref{equation n 2.18}) and (\ref{equation n 2.19}) we have 
$$F_{2}(x,t)=\frac{1}{i\lambda}\int_{0}^{t} g_{2}(x,s)\p_{s}(e^{i\lambda(x\cdot \omega+s)})\,ds. $$
Again, by integrating by parts with respect to the variable $s$, we find in view of (\ref{equation  n 2.14}) 
\begin{equation}\label{equation n 2.21}
\|F_{2}\|_{L^{2}(Q)}^{2}\leq \frac{C}{\lambda^{2\alpha}}\|\varphi\|^{2}_{H^{3}(\R^{n})}.
\end{equation}
Applying the standard energy estimate for hyperbolic initial boundary value problems to the solution $w$, we get from (\ref{equation n 2.20}) and (\ref{equation n 2.21})
$$\|r\|_{L^{2}(Q)}=\|\p_{t}w\|_{L^{2}(Q)}\leq \frac{C}{\lambda^{\alpha}}\|\varphi\|_{H^{3}(\R^{n})},\quad 0<\alpha\leq 1/2.$$
By using again the energy estimate applied to the solution $r$,  we get from (\ref{equation  n 2.14})
$$\begin{array}{lll}
\|\nabla  r\|_{L^{2}(Q)}\leq C \|g\|_{L^{2}(Q)}&\leq& C(\|g_{1}\|_{L^{2}(Q)}+\|g_{2}\|_{L^{2}(Q)})\\&\leq& C \Big(\lambda^{\alpha}+\displaystyle\frac{1}{\lambda^{\alpha-1}} \Big)\|\varphi\|_{H^{3}(\R^{n})}\\
&\leq& \displaystyle\frac{C}{ \lambda^{\alpha-1}}\|\varphi\|_{H^{3}(\R^{n})},
\end{array}$$
with $0<\alpha\leq 1/2$. This completes the proof of the Lemma.
\end{proof}
By a similar way, we can construct a solution to the backward problem.
\begin{Lemm}\label{Lemma 2.6}
  Given $\omega\in\mathbb{S}^{n-1}$ and  $\varphi\in \mathcal{C}^{\infty}_{0}(\R^{n})$. We consider the function $\phi$ defined by (\ref{equation  n 2.12}).  Then, for  any $\lambda>0$ the equation  $\mathscr{H}^{*}_{A,q}v=0$ in $Q$
 admits a solution 
   $$v\in \mathcal{C}([0,T];H^{1}(\Omega))\cap \mathcal{C}^{1}([0,T];L^{2}(\Omega)),$$
 of the form 
$$v(x,t)=\phi(x,t)\,b^{\sharp}_{\lambda}(x,t)\,e^{i\lambda(x\cdot \omega+t)}+r(x,t),$$
where 
$$b(x,t)=\exp \Big(i\int_{0}^{t} \omega\cdot \overline{A}^{\sharp}_{\lambda}(x+s\omega)\,ds\Big),$$
and $r(x,t)$ satisfies
$$r(x,T)=\p_{t}r(x,T)=0,\,\,\mbox{in}\,\,\Omega,\quad r(x,t)=0\,\,\mbox{on}\,\,\Sigma.$$
Moreover, there exists a positive constant $C>0$ such that 
\begin{equation}\label{equation N 2.22}
\lambda^{\alpha} \|r\|_{L^{2}(Q)}+\lambda^{\alpha -1}\|\nabla r\|_{L^{2}(Q)}\leq C \|\varphi\|_{H^{3}(\R^{n})}.
\end{equation}
\end{Lemm}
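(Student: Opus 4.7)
The plan is to mimic the construction of Lemma~\ref{Lemma 2.5}, exploiting the identity $\mathscr{H}^{*}_{A,q}=\mathscr{H}_{\overline{A},q}$, and to reduce the backward Cauchy problem to a forward one by the time reversal $t\mapsto T-t$. With the ansatz $v(x,t)=\phi(x,t)\,b^{\sharp}_{\lambda}(x,t)\,e^{i\lambda(x\cdot\omega+t)}+r(x,t)$, I would insert $v$ into $\mathscr{H}_{\overline{A},q}v=0$ and collect the powers of $\lambda$. The $\lambda^{2}$ terms cancel thanks to $|\omega|^{2}=1$ and the time/spatial factor in the phase. The $\lambda^{1}$ terms split into two pieces: one vanishes because $(\p_{t}-\omega\cdot\nabla)\phi=0$ by \eqref{equation  n 2.12}; the other vanishes provided the amplitude $b^{\sharp}_{\lambda}$ solves the transport equation $(\p_{t}-\omega\cdot\nabla-i\,\omega\cdot\overline{A}^{\sharp}_{\lambda})b^{\sharp}_{\lambda}=0$, which forces exactly the formula $b^{\sharp}_{\lambda}(x,t)=\exp\!\bigl(i\int_{0}^{t}\omega\cdot\overline{A}^{\sharp}_{\lambda}(x+s\omega)\,ds\bigr)$ announced in the statement.

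Having fixed the amplitude, the remainder $r$ must satisfy the backward wave problem $\mathscr{H}_{\overline{A},q}r=g$ in $Q$, $r(\cdot,T)=\p_{t}r(\cdot,T)=0$, $r=0$ on $\Sigma$, where, arguing as in the proof of Lemma~\ref{Lemma 2.5},
$$
g(x,t)=-e^{i\lambda(x\cdot\omega+t)}\bigl(\p_{t}^{2}-\Delta_{\overline{A}}+q\bigr)(\phi\,b^{\sharp}_{\lambda})(x,t)+2\lambda\,e^{i\lambda(x\cdot\omega+t)}\,\omega\cdot(\overline{A}^{\sharp}_{\lambda}-\overline{A})(x)\,(\phi\,b^{\sharp}_{\lambda})(x,t).
$$
I would then pass to $\tilde r(x,t)=r(x,T-t)$, which solves a standard forward Cauchy problem for $\mathscr{H}_{\overline{A}(\cdot),q}$ (after the obvious time reflection of the source), so the classical well-posedness theory in \cite{[BIB9],[BIB3]} applies and yields the regularity $v\in\mathcal{C}([0,T];H^{1}(\Omega))\cap\mathcal{C}^{1}([0,T];L^{2}(\Omega))$.

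For the quantitative bound \eqref{equation N 2.22}, I would introduce $w(x,t)=\int_{T}^{t}r(x,s)\,ds$, write the resulting source as $F=F_{1}+F_{2}$ with the natural analogs of \eqref{equation n 2.19}, and integrate by parts in $s$ against $\p_{s}(e^{i\lambda(x\cdot\omega+s)})/(i\lambda)$. Combining \eqref{equation  n 2.15} to estimate the derivatives of $b^{\sharp}_{\lambda}$ and of $\phi\,b^{\sharp}_{\lambda}$, I obtain $\|F_{1}\|_{L^{2}(Q)}\le C\lambda^{-1+\alpha/2}\|\varphi\|_{H^{3}(\R^{n})}$, while \eqref{equation  n 2.14} applied to $\overline{A}^{\sharp}_{\lambda}-\overline{A}$ yields $\|F_{2}\|_{L^{2}(Q)}\le C\lambda^{-\alpha}\|\varphi\|_{H^{3}(\R^{n})}$. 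The standard energy estimate applied to $w$ gives $\|r\|_{L^{2}(Q)}=\|\p_{t}w\|_{L^{2}(Q)}\le C\lambda^{-\alpha}\|\varphi\|_{H^{3}(\R^{n})}$, and a second energy estimate applied directly to $r$ with source $g$ (for which $\|g\|_{L^{2}(Q)}\le C\lambda^{1-\alpha}\|\varphi\|_{H^{3}(\R^{n})}$ by \eqref{equation  n 2.14}) controls $\|\nabla r\|_{L^{2}(Q)}$.

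The only step that requires genuine care—hence the main potential obstacle—is the correct bookkeeping of conjugates: the transport equation for $b^{\sharp}_{\lambda}$ must match the magnetic Laplacian $\Delta_{\overline{A}}$, not $\Delta_{A}$, so that the $\lambda^{1}$ terms cancel. Since $A$ is pure imaginary, $\overline{A}=-A$, and one must verify that the resulting cross-term $2i\,\overline{A}\cdot\nabla$ in $\Delta_{\overline{A}}$ combines with $\nabla b^{\sharp}_{\lambda}$ to produce exactly the transport operator written above; the remainder of the argument is then a direct transcription of the proof of Lemma~\ref{Lemma 2.5}.
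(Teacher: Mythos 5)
Your proposal is correct and follows exactly the route the paper intends: the paper gives no separate proof of Lemma \ref{Lemma 2.6}, stating only that it is obtained "by a similar way" to Lemma \ref{Lemma 2.5}, and your argument is precisely that adaptation (replace $A$ by $\overline{A}$ in the transport equation for $b^{\sharp}_{\lambda}$, reverse time to get well-posedness of the backward problem, and repeat the $w=\int r$ integration-by-parts and energy estimates with data at $t=T$). The bookkeeping of conjugates and the resulting source term $g$ and bounds on $F_{1},F_{2}$ match the paper's computations in the forward case.
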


  
\subsection{Stability  for the magnetic field} In this section we are going to use the geometrical optics solutions constructed before in order to retrieve a stability  result for the determination of the magnetic field $d\alpha_{A}$ from the DN map $N_{A,q}$. Let us first consider $A_{1},A_{2}\in \mathcal{A}(M,A_{0})$ and $q_{1},\,q_{2}\in \mathcal{Q}(M,q_{0})$. We define 
$$A(x)=(A_{1}-A_{2})(x) \quad \mbox{and} \quad q(x)=(q_{2}-q_{1})(x).$$
Assume that there exists $\rho >0$ such that $T> \mbox{Diam}\, \Omega +4\rho$. We denote 
$$\mathcal{D}_{\rho}=\{x\in\R^{n}\setminus \Omega,\,\,\mbox{dist}(x,\Omega)<\rho\}.$$ 
Throughout the rest of the paper, we assume  that supp $\varphi\subset \mathcal{D}_{\rho}$,  so that  we have 
$$\mbox{supp}\,\varphi\cap \Omega=\emptyset,\quad\mbox{and}\quad (\mbox{supp}\,\varphi\pm T \omega)\cap \Omega=\emptyset.$$ 
we  recall that  $A$ is assumed to be extended as $\tilde{A}$ outside $\Omega$ and that we denoted by $A$ this extension. Moreover, we extend $q$ to a $L^{\infty}(\R^{n})$ function by defining it by zero outside $\Omega$. We denote by $A$ and $q$ these extensions. 
\subsubsection{Preliminary estimate}
 The main purpose of this subsection is to establish the following 
\begin{Lemm}\label{Lemma 2.7}
 There exists a constant $C>0$ such that for any $\omega\in\mathbb{S}^{n-1}$, the following estimate  
$$\Big|\int_{\R}\omega\cdot A(y-s\omega)\,ds\Big| \leq C \Big(
\lambda^{\delta}\|N_{A_{2},q_{2}}-N_{A_{1},q_{1}}\|+\frac{1}{\lambda^{\beta}}
\Big)\quad\mbox{a.e,}\,\color{black} y\in \R^{n}\color{black},$$
holds true for any \color{black} $\lambda>0$ sufficiently large. Here $C$ depends only on $\Omega$, $T$ and $M$. 
\end{Lemm}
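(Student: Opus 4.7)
The plan is to pair two high-frequency geometric optics solutions---one of the direct equation $\mathscr{H}_{A_1,q_1}u_1=0$ and one of the adjoint equation $\mathscr{H}^*_{A_2,q_2}v_2=0$---through a Green-type identity, extract the $O(\lambda)$ contribution, and recognize the remaining amplitude integral as (essentially) the X-ray transform of $\omega\cdot A$ along lines parallel to $\omega$.

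Fix $\omega\in\s^{n-1}$ and $\varphi\in\mathcal{C}^\infty_0(\R^n)$ with $\mathrm{supp}\,\varphi\subset\mathcal{D}_\rho$ satisfying the stated translation hypothesis. By Lemma \ref{Lemma 2.5} choose $u_1=\phi\,b^\sharp_{1,\lambda}e^{i\lambda(x\cdot\omega+t)}+r_1$ and by Lemma \ref{Lemma 2.6} choose $v_2=\phi\,b^\sharp_{2,\lambda}e^{i\lambda(x\cdot\omega+t)}+r_2$. Let $u_2$ solve $\mathscr{H}_{A_2,q_2}u_2=0$ with $u_2|_\Sigma=u_1|_\Sigma=:f$ and zero Cauchy data, so that $U=u_1-u_2$ has vanishing Cauchy and lateral data and $\mathscr{H}_{A_2,q_2}U=(\mathscr{H}_{A_2,q_2}-\mathscr{H}_{A_1,q_1})u_1$. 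Pairing with $\overline{v_2}$ and applying the Green identity (\ref{equation n 2.3}) together with $A_1=A_2$ on $\Gamma$, one gets
\begin{equation*}
\int_Q(\mathscr{H}_{A_2,q_2}-\mathscr{H}_{A_1,q_1})u_1\,\overline{v_2}\,dx\,dt=\int_\Sigma\bigl((N_{A_1,q_1}-N_{A_2,q_2})f\bigr)\overline{g}\,d\sigma\,dt,\quad g=v_2|_\Sigma.
\end{equation*}

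A direct computation gives, with $A=A_1-A_2$, $\mathscr{H}_{A_2,q_2}-\mathscr{H}_{A_1,q_1}=2iA\cdot\nabla+i\,\dive A-A(A_1+A_2)+(q_2-q_1)$, and the sole $O(\lambda)$ contribution comes from $2iA\cdot\nabla$ acting on the exponential factor of $u_1$, yielding $-2\lambda(\omega\cdot A)\phi\,b^\sharp_{1,\lambda}e^{i\lambda(x\cdot\omega+t)}$. Multiplication by $\overline{v_2}$ cancels this phase at leading order, and the remainder bounds (\ref{equation  n 2.15}), (\ref{equation n 2.16}), (\ref{equation N 2.22}) absorb every other piece into an $O(\lambda^{1-\alpha})$ error. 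Since $\|f\|_{\mathcal{H}^1_0(\Sigma)}=O(\lambda)$ (from differentiating the $e^{i\lambda(\cdot)}$ factor) while $\|g\|_{L^2(\Sigma)}=O(1)$, the right-hand side is at most $C\lambda\|N_{A_2,q_2}-N_{A_1,q_1}\|$, so dividing by $\lambda$,
\begin{equation*}
\Big|\int_Q(\omega\cdot A)\,\phi^2\,b^\sharp_{1,\lambda}\overline{b^\sharp_{2,\lambda}}\,dx\,dt\Big|\leq C\|N_{A_2,q_2}-N_{A_1,q_1}\|+C\lambda^{-\alpha}.
\end{equation*}

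Next, $b^\sharp_{1,\lambda}\overline{b^\sharp_{2,\lambda}}=\exp\bigl(i\int_0^t\omega\cdot A^\sharp_\lambda(x+s\omega)\,ds\bigr)$ with $A^\sharp_\lambda=(A_1-A_2)^\sharp_\lambda$. Change variables $y=x+t\omega$ and set $H^\sharp(t,y)=\int_0^t\omega\cdot A^\sharp_\lambda(y-\tau\omega)\,d\tau$, so that $\partial_t e^{iH^\sharp}=i(\omega\cdot A^\sharp_\lambda)(y-t\omega)e^{iH^\sharp}$. Replacing $\omega\cdot A$ by $\omega\cdot A^\sharp_\lambda$ at cost $O(\lambda^{-\alpha})$ (via (\ref{equation  n 2.14})) and integrating by parts in $t$,
\begin{equation*}
\int_0^T(\omega\cdot A^\sharp_\lambda)(y-t\omega)e^{iH^\sharp(t,y)}\,dt=-i\bigl(e^{iH^\sharp(T,y)}-1\bigr).
\end{equation*}
The hypotheses $T>\mathrm{Diam}\,\Omega+4\rho$ and the support conditions on $\varphi$ guarantee that the segment $\{y-\tau\omega:\tau\in[0,T]\}$ either misses $\Omega$ (both sides vanish) or fully covers the intersection of the line with $\mathrm{supp}\,A$, giving $H^\sharp(T,y)=\int_\R\omega\cdot A(y-s\omega)\,ds+O(\lambda^{-\alpha})$. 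Since $A$ is purely imaginary, $iH^\sharp(T,y)$ is real and uniformly bounded by $TM$, so the elementary inequality $|e^{iH^\sharp(T,y)}-1|\geq c_M|H^\sharp(T,y)|$ converts the exponential bound into a linear one on the line integral.

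Finally, localizing $\varphi$ as a mollifier centered at an arbitrary $y_0\in\R^n$ at scale $\lambda^{-\sigma}$ and invoking Lebesgue's differentiation theorem yields the pointwise a.e.\ bound. The main obstacle is this last step: the $H^3$-norm of a mollifier at scale $\lambda^{-\sigma}$ enters the constants of Lemma \ref{Lemma 2.5} and inflates the right-hand side by a factor $\lambda^{c\sigma}$; if $\sigma$ is too large the gain $\lambda^{-\alpha}$ is destroyed, while if $\sigma$ is too small the mollified integral does not resolve the pointwise value of $\int_\R\omega\cdot A(y-s\omega)\,ds$. Optimizing $\sigma$ in a range $(0,\alpha/c)$ balances these two effects and produces strictly positive exponents $\delta,\beta$ as in the statement.
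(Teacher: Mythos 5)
Your construction follows the paper's own route almost step for step (geometric optics solutions for the two magnetic operators and the adjoint, the Green identity bringing in $N_{A_2,q_2}-N_{A_1,q_1}$, extraction of the $O(\lambda)$ term $-2\lambda(\omega\cdot A)$, integration by parts in $t$ producing $e^{iH^\sharp(T,y)}-1$, linearization using that the exponent is real because the potentials are purely imaginary, and a mollifier whose scale is balanced against $\lambda$). The genuine gap is in the identification of $H^\sharp(T,y)$ with the full line integral. You assert that for $y\in\mathcal{D}_\rho$ the one-sided segment $\{y-\tau\omega:\tau\in[0,T]\}$ ``either misses $\Omega$ or fully covers the intersection of the line with $\mathrm{supp}\,A$''. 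That dichotomy is false: if $\Omega$ lies on the $+\omega$ side of $y$, i.e. the line meets $\Omega$ only at points $y-\tau\omega$ with $\tau<0$, then the segment misses $\Omega$ entirely, so your argument only shows $H^\sharp(T,y)=O(\lambda^{-\alpha})$, while $\int_\R\omega\cdot A(y-s\omega)\,ds$ is a full chord integral that is not controlled at all. The paper closes exactly this hole by running the same estimate with $\omega$ replaced by $-\omega$, which bounds $\int_{-T}^{0}\omega\cdot A^\sharp_\lambda(y-s\omega)\,ds$, adding the two bounds to control $\int_{-T}^{T}$, and only then invoking $T>\mathrm{Diam}\,\Omega+4\rho$ and $\mathrm{supp}\,A\subset\Omega$ to pass to the integral over $\R$ and to a.e. $y\in\R^n$; alternatively one must, for each line, select a base point in $\mathcal{D}_\rho$ on the correct side of $\Omega$. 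Some such step has to be supplied.

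Two further inaccuracies sit in the same final stage. First, the mollifier cannot be ``centered at an arbitrary $y_0\in\R^n$'': its support must remain in $\mathcal{D}_\rho$ and satisfy $(\mathrm{supp}\,\varphi_h\pm T\omega)\cap\Omega=\emptyset$, otherwise the solutions of Lemmas \ref{Lemma 2.5} and \ref{Lemma 2.6} no longer have the vanishing Cauchy data your Green identity uses; the estimate is obtained for a.e. $y\in\mathcal{D}_\rho$ and transferred to a.e. $y\in\R^n$ only through the line-invariance argument above. Second, Lebesgue differentiation is not the right tool: at fixed $\lambda$ you cannot send $h\to0$, since the right-hand side grows like $h^{-6}$. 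What is actually needed, and what the paper uses, is the Lipschitz bound $\big|\int_0^T\omega\cdot\big(A^\sharp_\lambda(y-s\omega)-A^\sharp_\lambda(x-s\omega)\big)\,ds\big|\leq C|y-x|$ with $C$ uniform in $\lambda$ (a consequence of (\ref{equation  n 2.15}) with $|\gamma|=1$), which yields an $O(h)$ substitution error and then the explicit choice $h=\lambda^{-\alpha/7}$; your closing remark about optimizing $\sigma$ shows you see this trade-off, so that part is repairable, but as written the a.e. conclusion on $\R^n$ still rests on the faulty one-sided dichotomy.
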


\begin{proof}
In view of Lemma \ref{Lemma 2.5}, and using the fact that $\mbox{supp}\,\varphi\cap \Omega=\emptyset$, there exists a geometrical optic solution $u_{2}$ to the follwoing equation 
 \begin{equation*}
\left\{
  \begin{array}{ll}
\mathscr{H}_{A_{2},q_{2}}u_{2}=0, & \mbox{in} \,\,\,Q, \\
   \\
    u_{2|t=0}=\p_{t}u_{2|t=0}=0, & \mbox{in}\,\,\, \Omega,  \\
  \end{array}
\right.
\end{equation*}
in the following form
$$u_{2}(x,t)=\varphi_{2}(x+t\omega)b_{2,\lambda}^{\sharp}(x,t)e^{i\lambda(x\cdot \omega+t)}+r_{2}(x,t),$$
with $b_{2,\lambda}^{\sharp}(x,t)=\exp\Big(i\displaystyle\int_{0}^{t}\omega\cdot A_{2,\lambda}^{\sharp}(x+s\omega)\,ds \Big)$ and $r_{2}$ satisfies (\ref{equation n 2.16}).   Next, let us denote by $f_{\lambda}=u_{2|\Sigma}$. Let $u_{1}$ be a solution to the follwing system
\begin{equation*}
\left\{
  \begin{array}{ll}
 \mathscr{H}_{A_{1},q_{1}}u_{1}=0, & \mbox{in} \,\,\,Q, \\
  \\
    u_{1|t=0}=\p_{t}u_{1|t=0}=0, & \mbox{in}\,\,\, \Omega,  \\
   \\
    u_{1}=u_{2}:=f_{\lambda}, & \mbox{on} \,\,\,\Sigma.
  \end{array}
\right.
\end{equation*}
Putting $u=u_{1}-u_{2}$. Then, $u$ is a solution to 
\begin{equation}
\label{equation N 2.23}
\left\{
  \begin{array}{ll}
 \mathscr{H}_{A_{1},q_{1}}u=2iA\cdot \nabla u_{2}+V_{A} u_{2}+q u_{2}, & \mbox{in} \,\,\,Q, \\
   \\
    u_{|t=0}=\p_{t}u_{|t=0}=0, & \mbox{in}\,\,\, \Omega,  \\
    \\
    u=0, & \mbox{on} \,\,\,\Sigma,
  \end{array}
\right.
\end{equation} 
where $A=A_{1}-A_{2},$  $q=q_{2}-q_{1}$ and $V_{A}=i \,\mbox{div}A+(A_{2}^{2}-A_{1}^{2})$. On the other hand, Lemma \ref{Lemma 2.6} and the fact that $(\mbox{supp} \varphi \pm T\omega)\cap \Omega=\emptyset$, guarantee the existence of a geometrical optic solution $v$ to
\begin{equation*}
\left\{
  \begin{array}{ll}
 \mathscr{H}^{*}_{A_{1},q_{1}}v=0, & \mbox{in} \,\,\,Q, \\
\\
    v_{|t=T}=\p_{t}v_{|t=T}=0, & \mbox{in}\,\,\, \Omega,  \\
  \end{array}
\right.
\end{equation*} 
in the following form 
$$v(x,t)=\varphi_{1}(x+t\omega) b_{1,\lambda}^{\sharp}(x,t)e^{i\lambda(x\cdot\omega+t)}+r_{1}(x,t),$$
where $b_{1}(x,t)=\exp\big( i\displaystyle\int_{0}^{t}\omega\cdot \overline{A}^{\sharp}_{1,\lambda}(x+s\omega)\,ds\Big)$ and $r_{1}$ satisfies (\ref{equation N 2.22}). Multiplying the first equation in (\ref{equation N 2.23}) by $v$ and integrating by parts we get in view of (\ref{equation  n 2.3}),
\begin{eqnarray}\label{equation N 2.24}
\int_{Q}2iA\cdot \nabla u_{2}(x,t)\overline{v}(x,t)\,dx\,dt&=&\int_{\Sigma}(N_{A_{1},q_{1}}-N_{A_{2},q_{2}})(f) \overline{v}(x,t)\,d\sigma\,dt \cr
&&-\int_{Q}(V_{A}(x)+q(x))u_{2}(x,t)\overline{v}(x,t)\,dx\,dt.
\end{eqnarray} 
On the other hand, by replacing $u_{2}$ and $v$ by their expressions, we get 
\begin{eqnarray}\label{equation n 2.25}
\displaystyle\int_{Q}2iA\cdot \nabla u_{2}(x,t)\overline{v}(x,t)\,dx\,dt&=&\displaystyle\int_{Q}2i A\cdot \nabla (\phi_{2}b_{2,\lambda}^{\sharp})(x,t) (\overline{\phi_{1}b_{1,\lambda}^{\sharp}})(x,t)\,dx\,dt\cr
&&+\displaystyle\int_{Q}2iA\cdot \nabla (\phi_{2}b_{2,\lambda}^{\sharp})(x,t)\overline{r}_{1}(x,t) e^{i\lambda(x\cdot\omega+t)}\,dx\,dt\cr
&&
-2\lambda \displaystyle\int_{Q}\omega\cdot A(x) (\phi_{2}b_{2,\lambda}^{\sharp})(x,t) (\overline{\phi_{1}b_{1,\lambda}^{\sharp}})(x,t)\,dx\,dt\cr
&&-2\lambda \displaystyle\int_{Q} \omega\cdot A(x) (\phi_{2}b_{2,\lambda}^{\sharp})(x,t)\overline{r}_{1}(x,t) e^{i\lambda(x\cdot \omega+t)}\,dx\,dt\cr
&&+2i\displaystyle\int_{Q}A\cdot \nabla r_{2}(x,t)(\overline{\phi_{1}b_{1,\lambda}^{\sharp}})(x,t) e^{-i\lambda(x\cdot \omega+t)}\,dx\,dt\cr
&&+2i\displaystyle\int_{Q}A\cdot \nabla r_{2}(x,t) \overline{r}_{1}(x,t)\,dx\,dt\cr
&=&-2\lambda\displaystyle\int_{Q}\omega\cdot A(x) (\phi_{2}\overline{\phi}_{1})(x,t) (b_{2,\lambda}^{\sharp}\overline{b_{1}^{\sharp}}_{\lambda})(x,t) \,dx\,dt+I_{\lambda}. 
\end{eqnarray}
Using the fact that for $\lambda$ sufficiently large, we have  
\begin{equation}\label{equation n 2.26}
\|u_{2}\overline{v}\|_{L^{1}(Q)}\leq C \|\varphi_{1}\|_{H^{3}(\R^{n})}\|\varphi_{2}\|_{H^{3}(\R^{n})},\quad \mbox{and}\quad |I_{\lambda}|\leq C \lambda^{1-\alpha}\|\varphi_{1}\|_{H^{3}(\R^{n})}\|\varphi_{2}\|_{H^{3}(\R^{n})}.
\end{equation}
On the other hand from the trace theorem we have 
\begin{eqnarray*}
\Big|\displaystyle\int_{\Sigma}(N_{A_{2},q_{2}}-N_{A_{1},q_{1}})(f_{\lambda})\,\overline{v}(x,t)\,d\sigma\,dt\Big|&\leq&
\|N_{A_{2},q_{2}}-N_{A_{1},q_{1}}\|\,\|f_{\lambda}\|_{H^{1}(\Sigma)}\|\overline{v}\|_{L^{2}(\Sigma)}\cr
\cr
&\leq& \|N_{A_{2},q_{2}}-N_{A_{1},q_{1}}\|\,\|u_{2}-r_{2}\|_{H^{2}(Q)}\,\|\overline{v}-\overline{r}_{1}\|_{H^{1}(Q)}\cr
\cr
&\leq& C\lambda^{3}\|N_{A_{2},q_{2}}-N_{A_{1},q_{1}}\|\,\|\varphi_{1}\|_{H^{3}(\R^{n})}\|\varphi_{2}\|_{H^{3}(\R^{n})}.
\end{eqnarray*}
This, (\ref{equation n 2.25}) and (\ref{equation n 2.26}) yield 
\begin{eqnarray*}
\Big|\displaystyle\int_{Q}\omega\cdot A(x) (\phi_{2}\overline{\phi}_{1})(x,t) (b_{2,\lambda}^{\sharp}\!\!\!\!\!\!\!& & \!\!\!\!\!\!\!\!\!\overline{b_{1}^{\sharp}}_{\lambda})(x,t)dx\,dt\Big|\\
&\leq& C\Big(\lambda^{2}\|N_{A_{2},q_{2}}-N_{A_{1},q_{1}}\| +\frac{1}{\lambda^{\alpha}} \Big)\|\varphi_{1}\|_{H^{3}(\R^{n})}\|\varphi_{2}\|_{H^{3}(\R^{n})}.
\end{eqnarray*}
Since \color{black} $A=0$ outside $\Omega$, \color{black} then putting $y=x+t\omega$ and  $s=t-s$, we get for $\overline{\varphi}_{1}=\varphi_{2}=\varphi\in \mathcal{C}^{\infty}_{0}(\mathcal{D}_{\rho}),$
\begin{eqnarray*}
\Big|\displaystyle\int_{0}^{T}\int_{\R^{n}}\omega\cdot A(y-t\omega) \varphi^{2}(y) \exp\Big(-i\displaystyle\int_{0}^{t}\omega\cdot A^{\sharp}_{\lambda}\!\!\!\!\!\!\!& & \!\!\!\!\!\!\!\!\!(y-s\omega)\,ds \Big) \,dy\,dt\Big|\cr
&\leq& C\Big(\lambda^{2}\|N_{A_{2},q_{2}}-N_{A_{1},q_{1}}\| +\frac{1}{\lambda^{\alpha}} \Big)\|\varphi\|_{H^{3}(\R^{n})}^{2}.
\end{eqnarray*}
Now, using the fact that 
$$\begin{array}{lll}
\displaystyle\int_{0}^{T}\int_{\R^{n}}\omega\cdot A(y-t\omega)\varphi^{2}(y)\!\!\!\!\!&&\!\!\!\!\!\!\displaystyle\exp\Big(-i\int_{0}^{t}\omega\cdot A^{\sharp}_{\lambda}(y-s\omega)\,ds \Big)dy\,dt\\
&=&\!\!\displaystyle\int_{0}^{T}\int_{\R^{n}}\omega\cdot (A-A^{\sharp}_{\lambda})(y-t\omega)\varphi^{2}(y)\exp\Big(-i\int_{0}^{t}\omega\cdot A^{\sharp}_{\lambda}(y-s\omega)ds \Big)dy\,dt\\
&&\!\!\!+\displaystyle\int_{0}^{T}\int_{\R^{n}}\omega\cdot A^{\sharp}_{\lambda}(y-t\omega)\varphi^{2}(y)\displaystyle\exp\Big(-i\int_{0}^{t}\omega\cdot A^{\sharp}_{\lambda}(y-s\omega)\,ds \Big)dy\,dt.
\end{array}$$
we get from  (\ref{equation  n 2.14}) 
$$\Big|\int_{0}^{T}\int_{\R^{n}}\omega\cdot A^{\sharp}_{\lambda}(y-t\omega)\varphi^{2}(y)\displaystyle\exp\Big(-i\int_{0}^{t}\omega\cdot A^{\sharp}_{\lambda}(y-s\omega)\,ds \Big)dy\,dt  \Big|\leq C\Big(\lambda^{2}\|N_{A_{2},q_{2}}-N_{A_{1},q_{1}}\| +\frac{1}{\lambda^{\alpha}} \Big)\|\varphi\|_{H^{3}(\R^{n})}^{2}.$$
Therefore, since   
$$\p_{t}\Big[\exp \Big(-i\int_{0}^{t}\omega\cdot A^{\sharp}_{\lambda}(y-s\omega)\,ds  \Big)\Big]=-i\omega\cdot A^{\sharp}_{\lambda}(y-t\omega) \exp \Big(-i\int_{0}^{t}\omega\cdot A^{\sharp}_{\lambda}(y-s\omega)\,ds  \Big),  $$
we obtain the following estimation
\begin{equation}\label{equation n 2.27}
\Big|i\int_{\R^{n}}\varphi^{2}(y) \Big[\exp \Big( -i\int_{0}^{T} \omega\cdot A_{\lambda}^{\sharp}(y-s\omega)\Big)-1\Big]\,dy  \Big|\leq C\Big(  \lambda^{2} 
\|N_{A_{2},q_{2}}-N_{A_{1},q_{1}}\| +\frac{1}{\lambda^{\alpha}}\Big)\|\varphi\|_{H^{3}(\R^{n})}^{2}.
\end{equation}
We move now to specify the choice of the function $\varphi\in \mathcal{C}^{\infty}_{0}(\mathcal{D}_{\rho})$. We  set $B(0,r):= \{x\in\R^{n};\,\,|x|<r\}$ for all $r\geq 0$. Let $\psi\in\mathcal{C}_{0}^{\infty}(\R^{n})$ be a non-negative function which is
supported in the unit ball $B(0,1)$ and such that
$\|\psi\|_{L^{2}(\R^{n})}=1$. For $y\in\mathcal{D}_{\rho}$, we define
\begin{equation}\label{equation n 2.28}
\varphi_{h}(x)=h^{-n/2}\psi\Big(\frac{x-y}{h}\Big).
\end{equation} 
 Then, for $h>0$ sufficiently small such that Supp $\varphi_{h}\subset \mathcal{D}_{\rho}$.  We can verify that
$$
\mbox{Supp}\,\varphi_{h}\cap\Omega=\displaystyle\emptyset\quad \mbox{and} \quad(\mbox{Supp}\,\varphi_{h}\pm T\omega)\cap \Omega=\displaystyle\emptyset.
$$
Moreover, we have
\begin{eqnarray}\label{equation n 2.29}
\!\!\!\!\!\!\!\!\!&\Big|&\!\!\!\exp\Big(-i\displaystyle\int_{0}^{T}\omega\cdot A^{\sharp}_{\lambda}(y-s\omega)\,ds
\Big)-1 \Big|=\Big|
\displaystyle\int_{\R^{n}}\varphi_{h}^{2}(x)\Big[ \exp\Big(-i \int_{0}^{T}\omega\cdot A^{\sharp}_{\lambda}(y-s\omega)\,ds \Big)-1  \Big]\,dx\Big|\cr
&&\leq\quad\Big| \displaystyle\int_{\R^{n}}\varphi_{h}^{2}(x)\Big[\exp\Big( -i
\displaystyle\int_{0}^{T}\omega\cdot A^{\sharp}_{\lambda}(y-s\omega)\,ds \Big)
-\exp\Big(-\displaystyle i\int_{0}^{T}\omega\cdot A^{\sharp}_{\lambda}(x-s\omega)\,ds\Big)   \Big]  \,dx\Big|\cr
 &&\quad\quad\quad\quad\qquad\quad
 +\Big| \displaystyle\int_{\R^{n}} \varphi_{h}^{2}(x)\Big[\exp\Big(-\displaystyle i\int_{0}^{T}\omega\cdot A^{\sharp}_{\lambda}(x-s\omega)\,ds  \Big)-1 \Big]
 dx\Big|.
\end{eqnarray}
Using the fact that
$$
\Big|\displaystyle\int_{0}^{T}\!\! \Big( i\omega\cdot A^{\sharp}_{\lambda}(y-s\omega)\!-\!i\omega\cdot A^{\sharp}_{\lambda}(x-s\omega)\Big) ds\Big|\leq C \,|y-x|,
$$
 we deduce upon replacing $\varphi=\varphi_{h}$ in (\ref{equation n 2.27}), the following estimation
$$
\Big|\exp\Big(-i\int_{0}^{T}\omega\cdot A^{\sharp}_{\lambda}(y-s\omega)\,ds  \Big)-1
\Big|\leq \!C\!\int_{\R^{n}}\!\varphi_{h}^{2}(x)\,|y-x|\,dx+C\Big(
\lambda^{2}\|N_{A_{2},q_{2}}-N_{A_{1},q_{1}}\|+\frac{1}{\lambda^{\alpha}}
\Big)\|\varphi_{h}\|^{2}_{H^{3}(\R^{n})}.
$$
 On the other hand, we have
$$
\|\varphi_{h}\|_{H^{3}(\R^{n})}\leq C h^{-3}\quad\mbox{and}\quad\int_{\R^{n}}\varphi_{h}^{2}(x)|y-x|\,dx \leq C h.
$$
So, we end up getting the following inequality
$$
 \Big|\exp\Big(-i \int_{0}^{T}\omega\cdot A^{\sharp}_{\lambda}(y-s\omega)\,ds  \Big)-1
\Big|\leq C \,h+C\Big(
\lambda^{2}\|N_{A_{2},q_{2}}-N_{A_{1},q_{1}}\|+\frac{1}{\lambda^{\alpha}}
\Big)h^{-6}.
 $$
Selecting $h$ small such that $h=1/\lambda^{\alpha} h^{6}$,
that is $h=\lambda^{-\alpha/7}$, we find  $\delta>1$ and $0<\beta<\alpha<1$
such that
\begin{equation}\label{equation n 2.30}
\Big| \exp \Big( -i\int_{0}^{T}\omega\cdot A^{\sharp}_{\lambda}(y-s\omega)\,ds \Big)-1 \Big|\leq
C \Big(
\lambda^{\delta}\|N_{A_{2},q_{2}}-N_{A_{1},q_{1}}\|+\frac{1}{\lambda^{\beta}}
\Big).
\end{equation}
Using the fact that $|X|\leq e^{M}\,|e^{X}-1|$ for any $X$ real satisfying $|X|\leq M$  we  found out that 
$$\Big| -i\int_{0}^{T}\omega\cdot A^{\sharp}_{\lambda}(y-s\omega)\,ds \Big|\leq e^{MT}\Big| \exp\Big( -i\int_{0}^{T}\omega\cdot A(y-s\omega)\,ds  \Big)-1
\Big|,$$
where  $X=\displaystyle\int_{0}^{T}-i\omega\cdot A^{\sharp}_{\lambda}(y-s\omega)\,ds$. 
We conclude in light of (\ref{equation n 2.30}) the following the estimate
\begin{equation}\label{equation n 2.31}
\Big|\int_{0}^{T}\omega\cdot A^{\sharp}_{\lambda}(y-s\omega)\,ds \Big|\leq C \Big(
\lambda^{\delta}\|N_{A_{2},q_{2}}-N_{A_{1},q_{1}}\|+\frac{1}{\lambda^{\beta}}
\Big),\quad\mbox{a.\,e. }\,\,\,y\in \mathcal{D}_{\rho},\quad\omega\in\mathbb{S}^{n-1}.
\end{equation}
By replacing $\omega$ by $-\omega$, we get 
\begin{equation}\label{equation n 2.32}
\Big|\int_{-T}^{0}\omega\cdot A^{\sharp}_{\lambda}(y-s\omega)\,ds \Big| \leq C \Big(
\lambda^{\delta}\|N_{A_{2},q_{2}}-N_{A_{1},q_{1}}\|+\frac{1}{\lambda^{\beta}}
\Big),\quad\mbox{a.\,e.}\,\,\,y\in \mathcal{D}_{\rho},\quad\omega\in\mathbb{S}^{n-1}.
\end{equation}
Bearing in mind that 
$$\Big|\int_{-T}^{T}\omega\cdot A(y-s\omega)\,ds \Big|\leq \Big|\int_{-T}^{T}\omega\cdot A_{\lambda}^{\sharp}(y-s\omega)\,ds\Big|+\Big| \int_{-T}^{T}\omega\cdot (A_{\lambda}^{\sharp}-A)(y-s\omega)\,ds \Big|,$$
we can deduce from (\ref{equation n 2.32}) and (\ref{equation  n 2.14}) the following estimate 
$$\begin{array}{lll}
\displaystyle\Big|\int_{-T}^{T}\omega\cdot A(y-s\omega)\,ds\Big|&\leq& C \Big(
\lambda^{\delta}\|N_{A_{2},q_{2}}-N_{A_{1},q_{1}}\|+\displaystyle\frac{1}{\lambda^{\beta}}+\displaystyle\frac{1}{\lambda^{\alpha}}\Big)\\
&\leq& C\Big( \lambda^{\delta}\|N_{A_{2},q_{2}}-N_{A_{1},q_{1}}\|+\displaystyle\frac{1}{\lambda^{\beta}}
\Big),
\end{array}$$
for all $y\in \mathcal{D}_{\rho}$ and $\omega\in\mathbb{S}^{n-1}$. Since  $T>\mbox{Diam}\,\Omega+4\rho$ and \color{black} supp $A\subseteq \Omega$ we obtain in view of (\ref{equation n 2.31})-(\ref{equation n 2.32}), 
$$\Big|\int_{\R}\omega\cdot A(y-s\omega)\,ds\Big| \leq C \Big(
\lambda^{\delta}\|N_{A_{2},q_{2}}-N_{A_{1},q_{1}}\|+\frac{1}{\lambda^{\beta}}
\Big),\quad\mbox{a.\,e.}\,\,\,\color{black}y\in \R^{n}\color{black},\quad\omega\in\mathbb{S}^{n-1}.$$
This completes the proof of the Lemma.
\end{proof}

  
\subsubsection{An estimate for the magnetic field} \label{subsub1}
In this section we estimate the magnetic field $d\alpha_{A_{1}}-d\alpha_{A_{2}}$ by the use of the lemma proved in the previous section. For this purpose, let us first introduce this notation
$$a_{k}(x)=(A_{1}-A_{2})(x)\cdot e_{k}=A(x)\cdot e_{k},$$
where $(e_{k})_{1\leq k\leq n}$ is the canonical basis of $\R^{n}$. On the other hand, we denote  by 
\begin{equation}\label{equation n 2.33}
\sigma_{j,k}(x)=\frac{\p a_{k}}{\p x_{j}}(x)-\frac{\p a_{j}}{\p x_{k}}(x), \quad j,k=1...n.
\end{equation}
Let $\xi\in\omega^{\perp}$. By the change of variables $x=z-s \omega\in \omega^{\perp}\oplus \R \omega=\R^{n}$, we have  the following identity
$$z \cdot \xi=z \cdot \xi-s \omega\cdot \xi=x\cdot \xi,$$ with $d x =d \sigma dt$. Thus,  we get 
\begin{eqnarray*}
\displaystyle\int_{\omega^{\perp}}e^{-iz\cdot \xi}\int_{\R}\omega\cdot A(z-s\omega)\,ds\,d\sigma&=&
 \displaystyle\int_{\omega^{\perp}}\int_{\R}e^{-iz\cdot\xi+s\xi\cdot \omega } (\omega\cdot A)(z-s\omega)\,ds\,d\sigma\cr
 &=& \displaystyle\int_{\R^{n}}e^{-ix\cdot \xi}\omega\cdot A(x)\,dx.
 \end{eqnarray*}
 Assume that $\Omega\subset{B}(0, R_{1})$, with $R>0$. Using the fact that Supp $A\subset \Omega$, we get from Lemma \ref{Lemma 2.7} 
 \begin{eqnarray}\label{equation n 2.34}
 \Big|  \displaystyle\int_{\R^{n}}e^{-ix\cdot \xi}\omega\cdot A(x)\,dx \Big|&\leq& \displaystyle \int_{\omega^{\perp}\cap B(0,R_{1})}   \Big| e^{-iz\cdot \xi}\displaystyle \int_{\R}\omega\cdot A(z-s\omega)\,ds\Big| \,\,dz\cr
 \cr
 &\leq& C \,\Big(
\lambda^{\delta}\|N_{A_{2},q_{2}}-N_{A_{1},q_{1}}\|+\displaystyle\frac{1}{\lambda^{\beta}}
\Big).
 \end{eqnarray}
For $\xi\in\R^{n}$, we define $\omega=\frac{\xi_{j}e_{k}-\xi_{k}e_{j}}{|\xi_{j}e_{k}-\xi_{k}e_{j}|}$. Multiplying (\ref{equation n 2.34}) by $|\xi_{j}e_{k}-\xi_{k}e_{j}|$, we obtain
 $$\Big|\int_{\R^{n}}e^{-ix\cdot\xi }\Big(\xi_{j}a_{k}(x)-\xi_{k}a_{j}(x)  \Big)\,dx\Big|\leq C |\xi_{j}e_{k}-\xi_{k}e_{j}|\Big(
\lambda^{\delta}\|N_{A_{2},q_{2}}-N_{A_{1},q_{1}}\|+\displaystyle\frac{1}{\lambda^{\beta}}
\Big).$$
This together with  (\ref{equation n 2.33}) yield
$$|\widehat{\sigma}_{j,k}(\xi)|\leq C \color{black} <\xi>\color{black} \Big(\lambda^{\delta}\|N_{A_{2},q_{2}}-N_{A_{1},q_{1}}\|+\displaystyle\frac{1}{\lambda^{\beta}}
   \Big).$$ 
We are now in position to upper bound the magnetic field induced by the magnetic potential in suitable norms. For this purpose, let $0<R\leq \lambda$. In light of the above reasoning, this can be achieved by  decomposing the $H^{-1}(\R^{n})$ norm of $\sigma_{j,k}$ as follows
$$
\|\sigma_{j,k}\|^{2}_{H^{-1}(\R^{n})}=\displaystyle\int_{|\xi|\leq
R}|\widehat{\sigma}_{j,k}(\xi)|^{2}<\xi>^{-2}\,d\xi
+\displaystyle\int_{|\xi|>R}|\widehat{\sigma}_{j,k}(\xi)|^{2}
<\xi>^{-2}\,d\xi.$$
Then, we have
$$\|\sigma_{j,k}\|^{2}_{H^{-1}(\R^{n})}\leq C \Big(R^{n}\|<\xi>^{-1}\widehat{\sigma}_{j,k}\|^{2}_{L^{\infty}(B(0,R))}+\frac{1}{R^{2}}
\|\sigma_{j,k}\|_{L^{2}(\R^{n})}^{2}\Big).$$
This entails that
$$\|\sigma_{j,k}\|^{2}_{H^{-1}(\R^{n})}\leq C \Big( R^{n}\para{\lambda^{2\delta}\|N_{A_{2},q_{2}}
-N_{A_{1},q_{1}}\|^{2}+\frac{1}{\lambda^{2\beta}}}+\frac{1}{R^{2}}\Big).
$$
Next, we choose  $R>0$ in such away $R^{n}/\lambda^{2\beta}=1/R^{2}$. Thus, we find  $\mu_{1}>2$ and $\mu_{2}>0$ such that
\begin{eqnarray}\label{equation n 2.35}
\|\sigma_{j,k}\|^{2}_{H^{-1}(\R^{n})}&\leq& C\Big(\lambda^{\frac{2\beta}{n+2}+2\delta}\|N_{A_{2},q_{2}}-N_{A_{1},q_{1}}\|^{2}+\lambda^{\frac{2\beta n}{n+2}-2\beta}\Big)\cr
&\leq& C \Big(\lambda^{\mu_{1}}\|N_{A_{2},q_{2}}-N_{A_{1},q_{1}}\|^{2}+\lambda^{-\mu_{2}}\Big).
\end{eqnarray}
Now we assume that $\|N_{A_{2},q_{2}}-N_{A_{1},q_{1}}\|<c<1$,  and we minimize with respect to $\lambda$
 to end up getting  
$$\|\sigma_{j,k}\|_{H^{-1}(\R^{n})}\leq C
\|N_{A_{2},q_{2}}-N_{A_{1},q_{1}}\|^{1/2}.$$
The above estimate remains true in the case where 
 $\|N_{A_{2},q_{2}}-N_{A_{1},q_{1}}\|\geq c$, since we have 
$$\|\sigma_{j,k}\|_{H^{-1}(\R^{n})}\leq \frac{2 M}{c^{1/2}}c^{1/2}\leq \frac{2M}{c^{1/2}}\|N_{A_{2},q_{2}}-N_{A_{1},q_{1}}\|^{1/2}. $$
Therefore, we find out that 
\begin{eqnarray}\label{equation n 2.36}
\|d\alpha_{A_{1}}-d\alpha_{A_{2}}\|_{H^{-1}(\Omega)}\leq\displaystyle\sum_{j,k}\|\sigma_{j,k}\|_{H^{-1}(\R^{n})}
\leq C\|N_{A_{2},q_{2}}-N_{A_{1},q_{1}}\|^{1/2}.
\end{eqnarray}

  
\subsection{Stability for the electric potential}\label{subsub2}
The goal of this section is to prove a stability estimate for the electric potential. The proof involves using the stability estimate we have already obtained for the magnetic field. We will proceed as in \cite{[BIB24]}.

 Let $n<p_{0}<\infty$. Apply the Hodge decomposition to $A_{1}-A_{2}$ in the space $W^{1,p_{0}}(\Omega,\mathbb{C}^{n})$. We  define 
\begin{equation}
\label{equation n 2.37}
A^{'}_{1}=A_{1}+\frac{1}{2}\nabla \psi,\quad\mbox{and}\quad  A^{'}_{2}=A_{2}-\frac{1}{2}\nabla\psi,
\end{equation}
 with $\psi\in W^{3,p_{0}}(\Omega)\cap H^{1}_{0}(\Omega)$.  From Lemma 6.2 given in \cite{[BIB24]},  $A'=A^{'}_{2}-A'_{1} $ satisfies 
\begin{equation}\label{equation n 2.38}
\|A'\|_{W^{1,p_{0}}(\Omega)}\leq C \|d\alpha_{A_{1}}-d\alpha_{A_{2}}\|_{L^{p_{0}}(\Omega)}=C \|d\alpha_{A'_{1}}-d\alpha_{A'_{2}}\|_{L^{p_{0}}(\Omega)}.
\end{equation}
Recall that  since the DN map is invariant under gauge transformation then  we have 
\begin{equation}\label{equation n 2.39}
N_{A_{1},q_{1}}=N_{A_{1}+\frac{1}{2}\nabla \psi,q_{1}},\quad N_{A_{2},q_{2}}=N_{A_{2}-\frac{1}{2}\nabla \psi,q_{2}}.
\end{equation}

Throughout the rest of this section, $A_{j}$ will be replaced by $A'_{j}$ for  $j=1,\,2$.
\subsubsection{Preliminary estimate}
\begin{Lemm}\label{Lemma 2.8}There exist a constant $C>0$ such that for any $\omega\in\mathbb{S}^{n-1}$, the following estimate
$$
 \Big| \int_{\R}q(y-t\omega)\,dt \Big|\leq  C
 \Big(\lambda^{\delta}\|N_{A_{2},q_{2}}-N_{A_{1},q_{1}}\|+\frac{1}{\lambda^{\beta}}    \Big),\quad \mbox{a.\,e.\,} \,y\in\R^{n},
$$
holds true. Here $C$ depends only on $\Omega$, $T$ and $M$.
\end{Lemm}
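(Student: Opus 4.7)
The argument I would use parallels the proof of Lemma \ref{Lemma 2.7}, but now aims to extract the electric potential $q=q_{2}-q_{1}$. The fundamental new difficulty is that in the basic integration-by-parts identity, the $q$-contribution appears without a factor of $\lambda$ in front, so no power of $\lambda$ may be lost in the expansion; the smallness of the Hodge-adjusted magnetic potential $A'_{2}-A'_{1}$ furnished by the magnetic-field stability \eqref{equation n 2.36} combined with \eqref{equation  n 2.38} is what enables us to close the estimate.

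Concretely, rewriting the analogue of \eqref{equation N 2.24} with $A'_{j}$ in place of $A_{j}$ (legitimate by the gauge invariance \eqref{equation n 2.39}) and isolating the $q$-contribution, I would obtain
\begin{equation*}
\int_{Q} q\,u_{2}\overline{v}\,dx\,dt = \int_{\Sigma}(N_{A_{1},q_{1}}-N_{A_{2},q_{2}})(f)\,\overline{v}\,d\sigma\,dt - \int_{Q}\Big(2i(A'_{1}-A'_{2})\cdot\nabla u_{2} + V_{A'}u_{2}\Big)\overline{v}\,dx\,dt,
\end{equation*}
where $V_{A'} = i\,\mbox{div}(A'_{1}-A'_{2}) + A'_{2}\cdot A'_{2} - A'_{1}\cdot A'_{1}$. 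Arranging the Hodge decomposition (through solving $\Delta\psi = -\mbox{div}(A_{1}-A_{2})$ with $\psi_{|\Gamma}=0$) so that $\mbox{div}(A'_{1}-A'_{2})=0$ turns $V_{A'}$ into a pointwise quadratic expression bounded by $C\|A'_{1}-A'_{2}\|_{L^{\infty}(\Omega)}$. Plugging the geometric optics solutions of Lemmas \ref{Lemma 2.5} and \ref{Lemma 2.6} (built with the new potentials $A'_{j},q_{j}$) into this identity, the left-hand side becomes $\int_{Q}q\,\phi_{2}\overline{\phi}_{1}b^{\sharp}_{2,\lambda}\overline{b^{\sharp}_{1,\lambda}}\,dx\,dt$ up to an $O(\lambda^{-\alpha})$ remainder, while the right-hand side decomposes as a sum of three contributions: the DN-map term bounded as in Lemma \ref{Lemma 2.7} by $C\lambda^{3}\|N_{A_{2},q_{2}}-N_{A_{1},q_{1}}\|\|\varphi_{1}\|_{H^{3}(\R^{n})}\|\varphi_{2}\|_{H^{3}(\R^{n})}$, and the $A'$- and $V_{A'}$-terms contributing at most $C(\lambda+1)\|A'_{1}-A'_{2}\|_{L^{\infty}(\Omega)}\|\varphi_{1}\|_{H^{3}(\R^{n})}\|\varphi_{2}\|_{H^{3}(\R^{n})}$ plus a further $O(\lambda^{1-\alpha})$ error.

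The crucial quantitative bound $\|A'_{1}-A'_{2}\|_{L^{\infty}(\Omega)}\leq C\|N_{A_{2},q_{2}}-N_{A_{1},q_{1}}\|^{\gamma}$ for some $\gamma\in(0,1/2)$ would be derived by combining \eqref{equation  n 2.38}, the Sobolev embedding $W^{1,p_{0}}(\Omega)\hookrightarrow L^{\infty}(\Omega)$ for $p_{0}>n$, and an interpolation between the $H^{-1}$-stability \eqref{equation n 2.36} and the a priori $W^{1,\infty}$-bound on $d\alpha_{A_{1}}-d\alpha_{A_{2}}$ (which is finite thanks to the assumption $A_{1}-A_{2}\in W^{2,\infty}(\Omega)$). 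Inserting this bound, one finds
\begin{equation*}
\Big|\int_{Q}q\,\phi_{2}\overline{\phi}_{1}b^{\sharp}_{2,\lambda}\overline{b^{\sharp}_{1,\lambda}}\,dx\,dt\Big| \leq C\bigl(\lambda^{\delta'}\|N_{A_{2},q_{2}}-N_{A_{1},q_{1}}\|^{\gamma'} + \lambda^{-\beta'}\bigr)\|\varphi_{1}\|_{H^{3}(\R^{n})}\|\varphi_{2}\|_{H^{3}(\R^{n})}
\end{equation*}
for suitable positive exponents $\delta',\gamma',\beta'$. To conclude, I would follow verbatim the localization procedure used after \eqref{equation n 2.25} in Lemma \ref{Lemma 2.7}: set $\overline{\varphi}_{1}=\varphi_{2}=\varphi\in\mathcal{C}^{\infty}_{0}(\mathcal{D}_{\rho})$, change variables $y=x+t\omega$, specialize to $\varphi=\varphi_{h}$ from \eqref{equation n 2.28}, and optimize over $h$ and $\lambda$ (exploiting that the magnetic phase factor $b^{\sharp}_{2,\lambda}\overline{b^{\sharp}_{1,\lambda}}$ is Lipschitz in $x$ and uniformly bounded). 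Since $T>\mbox{Diam}\,\Omega+4\rho$ and $q$ extends by zero outside $\Omega$, the resulting light-ray integral extends naturally to all of $\R$, delivering the stated pointwise bound. The main technical hurdle will be the simultaneous calibration of the mollification parameter $\alpha$, the localization scale $h$, the interpolation exponent $\gamma$, and the frequency $\lambda$ so that every error term absorbs into a final estimate with some $\delta>0$ and $\beta\in(0,1)$.
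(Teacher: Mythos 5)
Your proposal is correct and follows essentially the same route as the paper's proof: gauge-transform to the Hodge-adjusted potentials, isolate the $q$-term in the integral identity, insert the geometric optics solutions, bound the Dirichlet-to-Neumann term by the trace theorem, control $\|A'\|_{L^{\infty}(\Omega)}$ via (\ref{equation n 2.38}), the Morrey embedding and interpolation with the magnetic-field estimate (\ref{equation n 2.36}), and then run the $\varphi_{h}$-localization and the $h$--$\lambda$ optimization exactly as in Lemma \ref{Lemma 2.7}. Only two bookkeeping points differ: the remainder contributions are $O(\lambda^{-\alpha})$ or carry a factor $\|A'\|_{L^{\infty}(\Omega)}$ (a bare additive $O(\lambda^{1-\alpha})$ term, as written in your sketch, would not decay in $\lambda$ and would spoil the optimization, but no such term actually occurs), and your final bound carries $\|N_{A_{2},q_{2}}-N_{A_{1},q_{1}}\|^{\gamma'}$ with $\gamma'<1$ rather than the first power, which is in effect what the paper's own computation produces through the term $\lambda\|N_{A_{1},q_{1}}-N_{A_{2},q_{2}}\|^{s/4}$ and is sufficient for the subsequent stability argument.
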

\begin{proof}
We start with the identity (\ref{equation N 2.24}) except this time we isolate the electric potential term
$$\begin{array}{lll}
\displaystyle\int_{Q}\!\!\!&q(x)&\!\!\!u_{2}(x,t)\overline{v}(x,t)\,dx\,dt=\displaystyle\int_{\Sigma}(N_{A'_{1},q_{1}}-N_{A'_{2},q_{2}})(f) \overline{v}(x,t)\,d\sigma\,dt \\
&&-\displaystyle\int_{Q}2iA'\cdot \nabla u_{2}(x,t)\overline{v}(x,t)\,dx\,dt-\displaystyle\int_{Q}V_{A'}(x)u_{2}(x,t)\overline{v}(x,t)\,dx\,dt,
\end{array}$$
where $V_{A'}=i\mbox{div}A'+(A_{2}^{'2}-A_{1}^{'2})$. By replacing $u_{2}$ and $v$ by their expressions, we get 
$$\begin{array}{lll}
\displaystyle\int_{Q}q(x)u_{2}(x,t)\overline{v}(x,t)\,dx\,dt&=&\displaystyle\int_{Q}q(x)(\phi_{2}\overline{\phi}_{1})(x,t)(b_{2,\lambda}^{\sharp}\overline{b}_{1,\lambda}^{\sharp})(x,t)\,dx\,dt\\
&&+\displaystyle\int_{Q}q(x) \phi_{2}(x,t)b_{2,\lambda}^{\sharp}(x,t)e^{i\lambda(x\cdot\omega+t)}\overline{r}_{1}(x,t)\,dx\,dt\\
&&+\displaystyle\int_{Q}q(x) \overline{\phi}_{1}(x,t)\overline{b_{1}^{\sharp}}_{\lambda}(x,t)e^{-i\lambda(x\cdot\omega+t)}r_{2}(x,t)\,dx\,dt\\
&&+\displaystyle\int_{Q}q(x)r_{2}(x,t)\,\overline{r}_{1}(x,t)\,dx\,dt.
\end{array}$$
Therefore, we have the following identity 
\begin{equation}\label{equation n 2.40}
\int_{Q}q(x)(\phi_{2}\overline{\phi}_{1})(x,t)(b_{2,\lambda}^{\sharp}\overline{b_{1}^{\sharp}}_{\lambda})(x,t)\,dx\,dt=\displaystyle\int_{\Sigma}(N_{A'_{1},q_{1}}-N_{A'_{2},q_{2}})(f) \overline{v}(x,t)\,d\sigma\,dt+I_{\lambda},
\end{equation}
where $I_{\lambda}$ is given by 
$$\begin{array}{lll}
I_{\lambda}&=&\displaystyle\int_{Q}2iA'\cdot \nabla u_{2}(x,t)\overline{v}(x,t)\,dx\,dt-\displaystyle\int_{Q}V_{A'}(x)u_{2}(x,t)\overline{v}(x,t)\,dx\,dt\\
&&-\displaystyle\int_{Q}q(x)\phi_{2}(x,t)b_{2,\lambda}^{\sharp}(x,t)e^{i\lambda(x\cdot \omega+t)}\overline{r}_{1}(x,t)\,dx\,dt-\displaystyle\int_{Q}q(x) r_{2}(x,t) \overline{r}_{1}(x,t)\,dx\,dt
\\
&&-\displaystyle\int_{Q}q(x)\overline{\phi}_{1}(x,t) \overline{b_{1}^{\sharp}}_{\lambda}(x,t)e^{-i\lambda(x\cdot \omega+t)}r_{2}(x,t)\,dx\,dt.
\end{array}$$
For $\lambda$ sufficiently large, we have
\begin{equation}\label{equation n 2.41}
|I_{\lambda}|\leq C \Big(\frac{1}{\lambda^{\alpha}}+\lambda \|A'\|_{L^{\infty}(\Omega)} \Big)\|\varphi_{1}\|_{H^{3}(\R^{n})}\|\varphi_{2}\|_{H^{3}(\R^{n})},
\end{equation}
with $0<\alpha\leq 1/2$. On the other hand, by the trace theorem, we have 
\begin{equation}\label{equation n 2.42}
\Big| \displaystyle\int_{\Sigma}(N_{A_{1},q_{1}}-N_{A_{2},q_{2}})(f) \overline{v}(x,t)\,d\sigma\,dt   \Big|\leq C\lambda^{3}\|N_{A_{1},q_{1}}-N_{A_{2},q_{2}}\|\|\varphi_{1}\|_{H^{3}(\R^{n})}\|\varphi_{2}\|_{H^{3}(\R^{n})}.
\end{equation}
Thus, from (\ref{equation n 2.40}), (\ref{equation n 2.41}) and (\ref{equation n 2.42}) we obtain 
 \begin{eqnarray*}
 \Big| \int_{Q}\!\!\!q(x)(\phi_{2}\overline{\phi}_{1})\!\!\!\!\!&&\!\!\!\!\!\!(x,t)(b_{2,\lambda}^{\sharp}\overline{b}_{1,\lambda}^{\sharp})(x,t)\,dx\,dt \Big|\\
 &&\leq C\Big( \lambda^{3}\|N_{A_{1},q_{1}}-N_{A_{2},q_{2}}\|+\lambda\|A'\|_{L^{\infty}(\Omega)}+\frac{1}{\lambda^{\alpha}}  \Big)\|\varphi_{1}\|_{H^{3}(\R^{n})}\|\varphi_{2}\|_{H^{3}(\R^{n})}.
 \end{eqnarray*}
Since $q=0$ outside $\Omega$, then by the change of variables $y=x+t\omega$, $s=t-s$ we get for $\overline{\varphi}_{1}=\varphi_{2}=\varphi$, 
$$\begin{array}{lll}
\Big|\displaystyle\int_{0}^{T}\int_{\R^{n}}q(y-t\omega)\varphi^{2}(y)\!\!\!\!\!\!\!&&\!\!\!\!\!\!\exp\Big(-i\displaystyle\int_{0}^{t}\omega\cdot A'^{\sharp}_{\lambda}(y-s\omega)\,ds  \Big)\,dy\,dt   \Big|\\
&\leq&  C\Big( \lambda^{3}\|N_{A_{1},q_{1}}-N_{A_{2},q_{2}}\|+\lambda\|A'\|_{L^{\infty}(\Omega)}+\displaystyle\frac{1}{\lambda^{\alpha}}  \Big)\|\varphi\|^{2}_{H^{3}(\R^{n})},
\end{array}$$
with $A'^{\sharp}_{\lambda}=\chi_{\lambda}\ast A'$. This and the fact that 
$$\begin{array}{lll}
\Big|\displaystyle\int_{0}^{T}\int_{\R^{n}}q(y-t\omega)\varphi^{2}(y)\Big[1-\!\!\!\!&&\!\!\!\!\!\!\exp\Big(-i\displaystyle \int_{0}^{t}\omega\cdot A'^{\sharp}_{\lambda}(y-s\omega)\,ds   \Big)  \Big]  \,dy\,dt  \Big|\\
&&
\leq C\|A'^{\sharp}_{\lambda}\|_{L^{\infty}(\R^{n})}\|\varphi\|^{2}_{H^{3}(\R^{n}))}\leq C\|A'\|_{L^{\infty}(\Omega)}\|\varphi\|_{H^{3}(\R^{n})}^{2},
\end{array}$$
implies that  
$$\Big|\int_{0}^{T}\int_{\R^{n}}q(y-t\omega)\varphi^{2}(y)\,dy\,dt \Big|\leq \Big( \lambda^{3}\|N_{A_{1},q_{1}}-N_{A_{2},q_{2}}\|+\lambda\|A'\|_{L^{\infty}(\Omega)}+\displaystyle\frac{1}{\lambda^{\alpha}}  \Big)\|\varphi\|^{2}_{H^{3}(\R^{n})}.$$
Applying Morrey's inequality given by the following estimate 
$$\|A'\|_{\mathcal{C}^{0,1-\frac{n}{p}}(\Omega)}\leq C \|A'\|_{W^{1,p}(\Omega)},\quad A'\in W^{1,p}(\Omega),$$
where $n<p\leq \infty$ and $C$ a positive constant which depends on $p$, $n$ and $\Omega$, we get 
$$\Big|\int_{0}^{T}\int_{\R^{n}}q(y-t\omega)\varphi^{2}(y)\,dy\,dt \Big|\leq \Big( \lambda^{3}\|N_{A_{1},q_{1}}-N_{A_{2},q_{2}}\|+\lambda\|A'\|_{W^{1,p_{0}}(\Omega)}+\displaystyle\frac{1}{\lambda^{\alpha}}  \Big)\|\varphi\|^{2}_{H^{3}(\R^{n})},$$
where $n<p_{0}<\infty$. Hence, in light of (\ref{equation n 2.38}), we find out that
\begin{equation}\label{equation n 2.43}
\Big|\int_{0}^{T}\int_{\R^{n}}q(y-t\omega)\varphi^{2}(y)\,dy\,dt \Big|\leq \Big( \lambda^{3}\|N_{A_{1},q_{1}}-N_{A_{2},q_{2}}\|+\lambda\|d\alpha_{A_{1}}-d\alpha_{A_{2}}\|_{L^{p_{0}}(\Omega)}+\displaystyle\frac{1}{\lambda^{\alpha}}  \Big)\|\varphi\|^{2}_{H^{3}(\R^{n})}.
\end{equation}
By interpolating, we have for $ s=2/p_{0}$
$$\begin{array}{lll}
\|d\alpha_{A_{1}}-d\alpha_{A_{2}}\|_{L^{p_{0}}(\Omega)}&\leq& \|d\alpha_{A_{1}}-d\alpha_{A_{2}}\|_{L^{\infty}(\Omega)}^{1-s}\|d\alpha_{A_{1}}-d\alpha_{A_{2}}\|_{L^{2}(\Omega)}^{s}\\
&\leq& C \|d\alpha_{A_{1}}-d\alpha_{A_{2}}\|_{H^{1}(\Omega)}^{s/2}\|d\alpha_{A_{1}}-d\alpha_{A_{2}}\|^{s/2}_{H^{-1}(\Omega)}\\
&\leq& C \|d\alpha_{A_{1}}-d\alpha_{A_{2}}\|^{s/2}_{H^{-1}(\Omega)}.
\end{array}$$
Therefore, from (\ref{equation n 2.43}) and (\ref{equation n 2.36}), we obtain 
$$\begin{array}{lll}
\displaystyle\Big|\int_{0}^{T}\int_{\R^{n}}q(y-t\omega)\varphi^{2}(y)\,dy\,dt \Big|&\leq& \Big( \lambda^{3}\|N_{A_{1},q_{1}}-N_{A_{2},q_{2}}\|+\lambda \|N_{A_{1},q_{1}}-N_{A_{2},q_{2}}\|^{s/4}+\displaystyle\frac{1}{\lambda^{\alpha}}  \Big)\|\varphi\|^{2}_{H^{3}(\R^{n})}\\
&\leq& C  \Big( \lambda^{3}\|N_{A_{1},q_{1}}-N_{A_{2},q_{2}}\|+\displaystyle\frac{1}{\lambda^{\alpha}}  \Big)\|\varphi\|^{2}_{H^{3}(\R^{n})}.
\end{array}$$
Now we just need to proceed as in the determination of the magnetic field. We  consider  the
sequence  $(\varphi_{h})_{h}$ defined by (\ref{equation n 2.28}) with
$y\in\mathcal{D}_{\rho}$. Since  
$$\begin{array}{lll}
&&\Big|\displaystyle\int_{0}^{T}q(y-t\omega)dt\Big|=\Big|\displaystyle\int_{0}^{T}\int_{\R^{n}}q(y-t\omega)\,\varphi_{h}^{2}(x)\,dx\,dt
\Big|\\
&&\quad\qquad\qquad\quad \leq \Big| \displaystyle\int_{0}^{T}\!\!\!\int_{\R^{n}}\!\!q(x-t\omega)\varphi_{h}^{2}(x)dx\,dt\Big|
+\Big| \displaystyle\int_{0}^{T}\!\!\!\int_{\R^{n}}\!\!\!\Big(q(y-t\omega)-q(x-t\omega)\Big)\varphi_{h}^{2}(x)dx\,dt \Big|,
\end{array}$$
and using the fact that
$|q(y-t\omega)-q(x-t\omega)|\leq C |y-x|$, we obtain
$$\begin{array}{lll}
\Big|\displaystyle\int_{0}^{T}q(y-t\omega)dt\Big| \leq\!
C\Big(\lambda^{3}\|N_{A_{2},q_{2}}-N_{A_{1},q_{1}}\|+\displaystyle\frac{1}{\lambda^{\alpha}}
\Big)
\|\varphi_{h}\|^{2}_{H^{3}(\R^{n})}+C\!\!\displaystyle\int_{\R^{n}}\!\!|x-y|\varphi_{h}^{2}(x)dx.
\end{array} $$
On the other hand, since $\|\varphi_{h}\|_{H^{3}(\R^{n})}\leq C h^{-3}$
 and  $ \displaystyle\int_{\R^{n}} |x-y|
\varphi_{h}^{2}(x)\,dx\leq \,C \,h$, we conclude that
$$
\Big|\int_{0}^{T}q(y-t\omega)\,dt\Big|\leq C \Big( \lambda^{3}\|N_{A_{2},q_{2}}-N_{A_{1},q_{1}}\|
+\frac{1}{\lambda^{\alpha}}\Big)h^{-6}+C\,h.
$$ Selecting $h$ small such that  $h=h^{-6}/\lambda^{\alpha}$. Then, we find two
constants $\delta>1$ and $0<\beta<\alpha<1$ such that
\begin{equation}\label{equation n 2.44}
\Big|\int_{0}^{T}q(y-t\omega) \Big|\leq C
\Big(\lambda^{\delta}\|N_{A_{2},q_{2}}-N_{A_{1},q_{1}}\|
+\frac{1}{\lambda^{\beta}}    \Big).
\end{equation}
The estimate (\ref{equation n 2.44}) remains true by replacing $\omega$ by $-\omega$. Then we get for all $y\in \mathcal{D}_{\rho}$, 
$$\Big|\int_{-T}^{T}q(y-t\omega) \,dt\Big|\leq C
\Big(\lambda^{\delta}\|N_{A_{2},q_{2}}-N_{A_{1},q_{1}}\|+\frac{1}{\lambda^{\beta}}    \Big).$$
Next, using the fact that $q=q_{2}-q_{1}=0$ outside $\Omega$ and since $T>$Diam $\Omega+4\rho$, we have 
$$
 \Big| \int_{\R}q(y-t\omega)\,dt \Big|\leq  C
 \Big(\lambda^{\delta}\|N_{A_{2},q_{2}}-N_{A_{1},q_{1}}\|+\frac{1}{\lambda^{\beta}}    \Big),\quad \mbox{a.\,e.\,} \,y\in\R^{n},\,\,\omega\in\mathbb{S}^{n-1}.
$$
This completes the proof of the Lemma.
\end{proof}

  
\subsubsection{Estimate for the electric potential} This section is devoted to upper bound the electric potential. 
In light of Lemma \ref{Lemma 2.8} and arguing as in Section \ref{subsub1}, we get for all $\xi\in \omega^{\perp}$ the following estimate
\begin{equation}\label{equation n 2.45}
|\widehat{q}(\xi)|\leq C  \Big(\lambda^{\delta}\|N_{A_{2},q_{2}}-N_{A_{1},q_{1}}\|
+\frac{1}{\lambda^{\beta}}\Big).
 \end{equation}
By changing $\omega\in\mathbb{S}^{n-1}$ (\ref{equation n 2.45}) holds for all $\xi\in\R^{n}$.
 By decomposing the $H^{-1}(\R^{n})$ norm of $q$, we find 
 $$\begin{array}{lll}
 \|q\|^{2}_{H^{-1}(\R^{n})}&=& \displaystyle\int_{|\xi|\leq R}|\widehat{q}(\xi)|^{2}<\xi>^{-2}\,d\xi+\displaystyle\int_{|\xi|>R}|\widehat{q}(\xi)|^{2}<\xi>^{-2}\,d\xi\\
 &\leq& C\Big(R^{n}\|\widehat{q}\|_{L^{\infty}(B(0,R))}^{2}+\displaystyle\frac{1}{R^{2}}\|q\|^{2}_{L^{2}(\Omega)} \Big).\\
 \end{array}$$
 Thus, in light of (\ref{equation n 2.45}), we get
 $$  \|q\|^{2}_{H^{-1}(\R^{n})}  \leq C\Big( R^{n}\lambda^{2\delta}\|N_{A_{2},q_{2}}-N_{A_{1},q_{1}}\|^{2}+\displaystyle\frac{R^{n}}{\lambda^{2\beta}}+\displaystyle\frac{1}{R^{2}}   \Big).
 $$
 We choose $R$ such that $R^{n+2}=\lambda^{2\beta}$ and we obtain 
 $$\|q\|_{H^{-1}(\R^{n})}\leq C\Big(R^{k}\|N_{A_{2},q_{2}}-N_{A_{1},q_{1}}\|+\frac{1}{R}\Big),$$
 for some positive constant $k>0$. All the above mentionned statements are valid for $\lambda$ sufficiently large. Assume that there exists $c>0$ such that  $\|N_{A_{2},q_{2}}-N_{A_{1},q_{1}}\|\leq c$. We select 
 $$R=\|N_{A_{2},q_{2}}-N_{A_{1},q_{1}}\|^{-1/(k+1)}.$$
 Thus, $\lambda$ is sufficietly large and we get 
 \begin{equation}\label{equation n 2.46}
 \|q\|_{H^{-1}(\R^{n})}\leq C\|N_{A_{2},q_{2}}-N_{A_{1},q_{1}}\|^{\mu_{2}},\quad \mu_{2}=1/(k+1)\in(0,1).
 \end{equation} 
 This completes the proof of Theorem \ref{prop}.

  
\section{Proof of Theorem \ref{Theorem1.1}}\label{Sec3}

At this stage we are well prepared to deal with the inverse problem under investigation,  that is the identification of $V$ appearing in (\ref{equation n 1.1}) from the knwoledge of $\Lambda_{V}$. Based on Lemma \ref{Lemma 2.1} and Theorem \ref{prop} we prove the main result of this paper. Let us start by stating  the main tool allowing us to prove the stability.

A crucial part of the proof of Theorem \ref{Theorem1.1} is an elliptic Carleman estimate  designed for the elliptic operator  $\Delta$ and given in \cite{[BIB7],[BIB10]} . 
For formulating our Carleman estimate, we shall first set some notaions: let a subboundary $\Gamma_{0}\subset \Gamma$. Assume that there exists a function $\psi\in \mathcal{C}^{2}(\Omega,\R^{n})$  such that
$$\psi(x)>0,\,\,\,x\in\Omega,\qquad \quad |\nabla\psi(x)|>0\,\,\, x\in\overline{\Omega},\qquad    \mbox{and} \quad \p_{\nu}\psi(x)\leq 0\,\,\,x\in\Gamma\setminus  \Gamma_{0}.$$ 
On the other hand, for any given parameter $\beta>0$,  we define the weight function $\eta$ as follows

$$\eta(x)=e^{\beta \,\psi(x)}\qquad x\in\Omega.$$
Then the following Carleman estimate holds true: 
\begin{proposition} (see (\cite{[BIB7],[BIB10]}))
  \label{Proposition 3.1}
There exist $\gamma_{0}>0$ and $C>0$ such that for all $\gamma\geq \gamma_{0}$, we have the following estimate: 
$$
\int_{\Omega}(\gamma|\nabla u(x)|^{2}+\gamma^{3}|u(x)|^{2})e^{2\gamma\eta(x)}\,dx\leq \int_{\Omega}|\Delta u(x)|^{2}e^{2\gamma\eta(x)}\,dx+\int_{\Gamma_{0}}\gamma|\p_{\nu}u(x)|^{2}e^{2\gamma\eta(x)}\,d\sigma,
$$ 
for any $u\in H^{2}(\Omega)$ such that $u(x)=0$ on $\Gamma.$
\end{proposition}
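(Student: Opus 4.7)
The plan is to derive the estimate by the classical conjugation/commutator method for second-order operators with real principal symbol. First I set $v(x)=e^{\gamma\eta(x)}u(x)$, so $v|_{\Gamma}=0$, and compute
\begin{equation*}
e^{\gamma\eta}\Delta u=\Delta v-2\gamma\,\nabla\eta\cdot\nabla v+\bigl(\gamma^{2}|\nabla\eta|^{2}-\gamma\Delta\eta\bigr)v=:P_{\gamma}v+Q_{\gamma}v,
\end{equation*}
where $P_{\gamma}v=\Delta v+\gamma^{2}|\nabla\eta|^{2}v$ is formally $L^{2}$-self-adjoint and $Q_{\gamma}v=-2\gamma\,\nabla\eta\cdot\nabla v-\gamma(\Delta\eta)v$ is, up to a symmetric correction, $L^{2}$-skew-adjoint.

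Next I expand
\begin{equation*}
\int_{\Omega}|e^{\gamma\eta}\Delta u|^{2}\,dx=\|P_{\gamma}v\|_{L^{2}}^{2}+\|Q_{\gamma}v\|_{L^{2}}^{2}+2(P_{\gamma}v,Q_{\gamma}v)_{L^{2}(\Omega)},
\end{equation*}
and integrate by parts in the cross term to obtain $2(P_{\gamma}v,Q_{\gamma}v)_{L^{2}(\Omega)}=([P_{\gamma},Q_{\gamma}]v,v)_{L^{2}(\Omega)}+\mathcal{B}(v)$, where $\mathcal{B}(v)$ collects all boundary contributions. Using $\nabla\eta=\beta e^{\beta\psi}\nabla\psi$ and $\mathrm{Hess}(\eta)=\beta^{2}e^{\beta\psi}\nabla\psi\otimes\nabla\psi+\beta e^{\beta\psi}\mathrm{Hess}(\psi)$, a direct symbol computation shows that $([P_{\gamma},Q_{\gamma}]v,v)_{L^{2}(\Omega)}$ contains the positive pieces $4\beta^{2}\gamma\int_{\Omega}e^{\beta\psi}|\nabla\psi\cdot\nabla v|^{2}\,dx$ and $4\beta^{4}\gamma^{3}\int_{\Omega}e^{3\beta\psi}|\nabla\psi|^{4}|v|^{2}\,dx$, modulo contributions of order $O(\beta\gamma)$ and $O(\beta^{3}\gamma^{3})$ coming from $\mathrm{Hess}(\psi)$ and lower-order terms. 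Since $|\nabla\psi|\geq c>0$ on $\overline{\Omega}$, choosing $\beta$ sufficiently large (and then $\gamma_{0}$ large) forces these indefinite remainders into the good terms; combining with $\|P_{\gamma}v\|_{L^{2}}^{2}$ to recover the component of $\nabla v$ transverse to $\nabla\psi$ via $\Delta v$ then yields
\begin{equation*}
\|P_{\gamma}v\|_{L^{2}}^{2}+([P_{\gamma},Q_{\gamma}]v,v)_{L^{2}(\Omega)}\geq C\bigl(\gamma\|\nabla v\|_{L^{2}(\Omega)}^{2}+\gamma^{3}\|v\|_{L^{2}(\Omega)}^{2}\bigr).
\end{equation*}

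The main obstacle will be controlling the boundary term $\mathcal{B}(v)$. Because $v=0$ on $\Gamma$, every tangential derivative of $v$ vanishes there and $\mathcal{B}(v)$ reduces to a multiple of $\beta\int_{\Gamma}\gamma\,e^{\beta\psi}|\partial_{\nu}v|^{2}\partial_{\nu}\psi\,d\sigma$. On $\Gamma\setminus\Gamma_{0}$ the assumption $\partial_{\nu}\psi\leq 0$ gives this contribution the favorable sign and it can be discarded, while on $\Gamma_{0}$ it is absorbed by $C\gamma\int_{\Gamma_{0}}|\partial_{\nu}u|^{2}e^{2\gamma\eta}\,d\sigma$ through the trace identity $\partial_{\nu}v=e^{\gamma\eta}\partial_{\nu}u$. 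Finally translating back from $v$ to $u$ via $|\nabla v|^{2}=e^{2\gamma\eta}\bigl(|\nabla u|^{2}+2\gamma u\nabla u\cdot\nabla\eta+\gamma^{2}|\nabla\eta|^{2}u^{2}\bigr)$ and reabsorbing the $\gamma^{2}|u|^{2}$ cross term into the dominant $\gamma^{3}|u|^{2}$ piece closes the Carleman inequality.
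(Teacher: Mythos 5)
Your argument is correct and is essentially the standard conjugation/commutator proof of this estimate: the paper itself does not prove Proposition \ref{Proposition 3.1} but quotes it from \cite{[BIB7],[BIB10]}, and the proofs there follow the same scheme you outline (set $v=e^{\gamma\eta}u$, split the conjugated Laplacian into symmetric and skew parts, exploit the convexified weight $\eta=e^{\beta\psi}$ with $|\nabla\psi|>0$ to make the commutator positive for $\beta$ then $\gamma$ large, recover the full gradient through $\|P_{\gamma}v\|^{2}$, and handle the boundary term via $\nabla v=(\p_{\nu}v)\nu$ on $\Gamma$ together with the sign condition $\p_{\nu}\psi\leq 0$ on $\Gamma\setminus\Gamma_{0}$). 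So your proposal matches the intended proof; no gap beyond the usual bookkeeping you already indicate.
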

Using the above statement, we are now able to stably retrieve the first order coefficient $V$ from the information given by the DN map $\Lambda_{V}$.

  
\subsection{Stability estimate for the velocity field} Armed with Proposition \ref{Proposition 3.1}, we turn now to proving the main result of this paper. Let us consider  two velocity fields  $V_{1},\,V_{2}\in \mathcal{V}(V_{0},M)$. We define $V=V_{1}-V_{2}$. Our goal is to show  that V stably depends on the DN map $\Lambda_{V_{1}}-\Lambda_{V_{2}}$. In view of  (\ref{equation n 2.37}) and (\ref{equation  n 2.4}) we have the existence of a function $\varphi\in \!W^{3,p_{0}}(\Omega)\cap H_{0}^{1}(\Omega)$ such that 
\begin{equation}\label{equation n 3.47}
V=V_{1}-V_{2}=-2iA'+\nabla(2i\psi)=V'+\nabla \varphi.
\end{equation} 
Then $\varphi$ is solution to the following equation
 \begin{equation*}
\left\{
     \begin{array}{ll}
       \Delta \varphi=\Psi:=\mbox{div} V-\mbox{div} V^{'}=\mbox{div} V, & \mbox{in}\,\,\,\Omega, \\
       \\
       \varphi=0, & \mbox{in}\,\,\,\Gamma.
     \end{array}
   \right.
\end{equation*}
Thanks to (\ref{equation  n 2.4})  and (\ref{equation n 3.47}), we have 
$$\Psi=2(q_{2}-q_{1})+\frac{1}{2}(V^{'})(V_{1}+V_{2})+\displaystyle\frac{1}{2}\nabla \varphi\,(V_{1}+V_{2}).$$
By applying Proposition \ref{Proposition 3.1} to the solution $\varphi$ and using the fact that $\|V_{j}\|_{L^{\infty}(\Omega)}\leq M$,  $j=1,\,2$, we find
\begin{eqnarray}\label{equation n 3.48}
&&\displaystyle\int_{\Omega}\gamma|\nabla \varphi(x)|^{2}e^{2\gamma\eta(x)}\,dx \leq \displaystyle\int_{\Omega}|\Delta \varphi(x)|^{2}e^{2\gamma\eta(x)}\,dx+\displaystyle\int_{\Gamma_{0}}\gamma|\p_{\nu}\varphi(x)|^{2}e^{2\gamma\eta(x)}\,d\sigma\cr
\quad\quad &&\quad \leq C\displaystyle\int_{\Omega}(|(q_{2}-q_{1})(x)|^{2}+|V'(x)|^{2}+|\nabla\varphi(x)|^{2})e^{2\gamma\eta(x)}dx+\displaystyle\int_{\Gamma_{0}}\gamma|\p_{\nu}\varphi(x)|^{2}e^{2\gamma \eta(x)}\,d\sigma.
\end{eqnarray}
By taking $\gamma$ sufficiently large, (\ref{equation n 3.48}) immediately yields
\begin{equation*}
\int_{\Omega} \gamma |\nabla \varphi(x)|^{2} e^{2 \gamma\eta(x)}\,dx\leq  C\displaystyle\int_{\Omega}(|(q_{2}-q_{1})(x)|^{2}+|V'(x)|^{2})e^{2\gamma\eta(x)}dx+\displaystyle\int_{\Gamma_{0}}\gamma|\p_{\nu}\varphi(x)|^{2}e^{2\gamma \eta(x)}\,d\sigma.
\end{equation*}
This implies that 
\begin{equation}\label{equation n 3.49}
\|\nabla \varphi\|^{2}_{L^{2}(\Omega)}\leq C \|q_{2}-q_{1}\|^{2}_{L^{2}(\Omega)}+\|V'\|^{2}_{L^{2}(\Omega)}+\|\p_{\nu}\varphi\|_{L^{2}(\Gamma_{0})}^{2}.
\end{equation}
By interpolation and since $\|q_{2}-q_{1}\|_{W^{1,\infty}(\Omega)}\leq M$, it follows from  (\ref{equation n 2.46}) that
\begin{eqnarray}\label{equation n 3.50}
\|q_{2}-q_{1}\|_{L^{2}(\Omega)} \leq\|q_{2}-q_{1}\|^{1/2}_{H^{1}(\Omega)} \|q_{2}-q_{1}\|_{H^{-1}(\Omega)}^{1/2}\leq  C \|N_{A_{2},q_{2}}-N_{A_{1},q_{1}}\|^{\kappa_{1}},
\end{eqnarray}
for some $\kappa_{1}\in (0,1).$ Moreover, from what has already been shown in Section \ref{subsub2}, it is readily seen that 
\begin{equation}\label{equation n 3.51}
\|V'\|_{L^{2}(\Omega)}\leq  C\|A'\|_{L^{2}(\Omega)}\leq C \|N_{A_{1},q_{1}}-N_{A_{2},q_{2}}\|^{\kappa_{2}},
\end{equation}
for some $\kappa_{2}>0$.  On the other hand, owing to the assumption that $V=V_{1}-V_{2}=0$ on $\Gamma$ and taking advantage of Trace's Theorem, one gets 
\begin{equation}\label{equation n 3.52}
\|\p_{\nu}\varphi\|_{L^{2}(\Gamma_{0})}\leq \|V'\|_{L^{2}(\Gamma)}\leq \|V'\|_{H^{1}(\Omega)}\leq \| V'\|_{L^{2}(\Omega)}^{1/2}\|V'\|^{1/2}_{H^{2}(\Omega)}\leq C \|N_{A_{1},q_{1}}-N_{A_{2},q_{2}}\|^{\kappa_{3}},
\end{equation}
for some $\kappa_{3}>0$. In view of (\ref{equation n 3.49})--(\ref{equation n 3.52}), it is easily understood that 
$$\begin{array}{lll}
\|V_{1}-V_{2}\|_{L^{2}(\Omega)}\leq \|V'\|_{L^{2}(\Omega)}+\|\nabla \varphi\|_{L^{2}(\Omega)}
\leq \|N_{A_{2},q_{2}}-N_{A_{1},q_{1}}\|^{\kappa},
\end{array}$$
where $\kappa:=\min\, (\kappa_{1},\kappa_{2},\kappa_{3})$. From (\ref{equation  n 2.5}) we deduce the desired result. 


\end{document}